\renewcommand{\pod}[1]{\allowbreak\mathchoice
  {\if@display \mkern 18mu\else \mkern 8mu\fi (#1)}
  {\if@display \mkern 18mu\else \mkern 8mu\fi (#1)}
  {\mkern4mu(#1)}
  {\mkern4mu(#1)}
}
\newtheorem{theorem}{Theorem}
\newtheorem{ther}{Theorem}[section]
\newtheorem{example}[theorem]{Example}
\newtheorem{proposition}[theorem]{Proposition}
\newtheorem{definition}[theorem]{Definition}
\newtheorem{lemma}[theorem]{Lemma}
\newtheorem{corollary}[theorem]{Corollary}
\DeclareMathOperator{\spn}{span}
\newcommand{\koop}{\mathcal{K}}
\newcommand{\mathbm}{\bm}
\def\x{{\bf x}}
\def\n{{\bf n}}
\def\y{{\bf y}}
\def\z{{\bf z}}
\def\s{{\bf s}}
\def\w{{\bf w}}
\def\k{{\bf k}}
\def\m{{\bf m}}
\def\cB{{\cal{B}}}
\def\cD{{\cal{D}}}
\def\cO{{\cal{O}}}
\newcommand{\cH}{\mathcal{H}}
\newcommand{\caA}{\mathcal{A}}
\newcommand{\cl}{\mathrm{cl}}
\def\A{{\bf A}}
\def\F{{\bf F}}
\def\G{{\bf G}}
\def\H{{\bf H}}
\def\X{{\bf X}}
\def\v{{\bf v}}
\def\u{{\bf u}}
\def\f{{\bf f}}
\def\g{{\bf g}}
\def\h{{\bf h}}
\def\c{{\bf c}}
\def\bE{\mathbb{E}}
\def\bZ{\mathbb{Z}}
\def\bN{\mathbb{N}}
\def\bC{\mathbb{C}}
\def\C{\mathbb{C}}
\def\1{\mathbbm{1}}
\def\bR{\mathbb{R}}
\def\R{\mathbb{R}}
\def\bea{\begin{eqnarray}}
\def\eea{\end{eqnarray}}
\def\be{\begin{equation}}
\def\ee{\end{equation}}
\def\eps{\epsilon}
\def\rar{\rightarrow}
\title{Koopman Operator, Geometry, and Learning}
\author{
 Igor Mezi\'c\footnote{
    Department of Mechanical Engineering and Department of Mathematics, University of California, Santa Barbara.
    }
  }
\begin{document}
\maketitle
\begin{abstract}

We provide a framework for learning of dynamical systems rooted in the concept of representations and Koopman operators. The interplay between the two leads to the full description of systems that can be represented linearly in a finite dimension, based on the properties of the Koopman operator spectrum. The geometry of state space is connected to the notion of representation, both in the linear case - where it is related to joint level sets of eigenfunctions - and in the nonlinear representation case. As shown here, even nonlinear finite-dimensional representations can be learned using the Koopman operator framework, leading to a new class of representation eigenproblems. The connection to learning using neural networks is given. An extension of the Koopman operator theory to ``static" maps between different spaces is provided. The effect of the Koopman operator spectrum on Mori-Zwanzig type representations is discussed.
\end{abstract}
\section*{Introduction}

The end of the 20th and the beginning of the 21st century has seen a revolutionary 
increase in the availability of data. Indeed, we are in the middle of the {\it sensing} revolution,
where sensing is used in the broadest meaning of data acquisition. Most of this data goes unprocessed,
unanalyzed, and consequently, unused. This causes missed opportunities, in 
domains of vast societal importance - health, commerce, technology, security, just to mention some.

A variety of mathematical methods have emerged out of that need. Perhaps the most popular, methodology of Deep Neural Networks has as the underlying learning elements ``neuron functions", that
are modeled after biological neurons. The algorithms based on deep learning have
achieved substantial success in image recognition, speech recognition and natural language processing, deploying the ``supervised" machine learning philosophy.
Convolutional neural networks \cite{lecun2018power} provided a superstructure to the deep neural network architecture   that resembles the organization of the animal visual cortex.
This led to an enormous success in image recognition, and even in realistic image generation, via Generative Adversarial Networks (GAN's) \cite{goodfellow2014generative}. All of these examples are essentially static pattern recognition tasks. Deep Learning methodologies are less successful in dynamically changing contexts, present, for example, in autonomous driving.
It is interesting that, in contrast to biological modules responsible for vision,  even the basic issue of finding specific brain structures that are responsible for perception of time, and thus understanding of dynamics, are still being investigated \cite{roseboom2019activity}.

Koopman operator theory has recently emerged as the main candidate for machine learning of dynamical processes \cite{Mezic:2005,Budisicetal:2012,parmar2020survey}. Here, we briefly describe its history, emerging from efforts to extend the methodology used in quantum mechanics,  and describe the current focus, setting it within the new concept of dynamic process {\it representation}, and connecting along the way to the geometric dynamical systems theory methods that enable data-driven discovery of essential elements of the theory, for example stable and unstable manifolds. What emerges is a powerful framework for unsupervised learning from small amounts of data, enabling self-supervised learning \cite{lecun2018power} that is much more in line with the theory of human learning than the machine learning methods of the second wave \cite{prabhakar2017powerful}.

\section*{History}
\noindent Driven by success of the operator-based framework in quantum theory, Bernard Koopman proposed in his 1931 paper \cite{Koopman:1931} to treat classical mechanics in a similar way, using the spectral properties of the composition operator associated with dynamical system evolution. Koopman  extended this study in a joint work with von Neumann in \cite{koopman1932dynamical} \footnote{It is in this paper,  in which properties of continuous spectrum of the Koopman operator were investigated, that Koopman and von Neumann realized chaotic dynamics is possible for deterministic systems, stating ``Theorem I may also be expressed by saying that the states of motion
corresponding to any set $M$ of $\Omega$ become more and more spread out into
an amorphous everywhere dense chaos. Periodic orbits, and such like,
appear only as very special possibilities of negligible probability." This was 30 years before Lorenz and Ueda's contributions that started the chaotic dynamics revolution.}. Those works, restricted to Hamiltonian dynamical systems, did not attract much attention originally, as evidenced by the fact that between 1931 and 1990, the Koopman paper \cite{Koopman:1931} was cited 100 times, according to Google Scholar. This can be attributed largely to the major success of the geometric picture of dynamical systems theory in its state-space realization advocated by Poincar\'e. In fact, with Lorenz's discovery of a strange attractor in 1963, the dynamical systems community turned to studying dissipative systems and much progress has been made since. Within the current research in dynamical systems, some of the crucial roadblocks are associated with high-dimensionality of the problems and necessity of understanding behavior globally (away from the attractors) in  the state space. However, the weaknesses of the geometric approach are related exactly to its locality - as it often relies on perturbative expansions around a known geometrical object - and low-dimensionality, as it is hard to make progress in higher dimensional systems using geometry tools.

Out of today's 1000+ citations of Koopman's original work, \cite{Koopman:1931}, about 80\% come from the last 20 years. 
It was only in the 1990's  and 2000's that potential for wider applications of the Koopman operator-theoretic approach has been realized \cite{LasotaandMackey:1994,Mezic:1994,MezicandBanaszuk:2000,MezicandBanaszuk:2004,Mezic:2005,Rowleyetal:2009}. In the past decade the trend of applications of this approach has continued, as summarized in \cite{Budisicetal:2012,parmar2020survey}. This is partially due to the fact that strong connections have been made between the spectral properties of the Koopman operator for dissipative systems and the geometry of the state space. In fact, the hallmark of the work on the operator-theoretic approach in the last two decades is the linkage between geometrical properties of dynamical systems - whose study has been advocated and strongly developed by Poincar\'e and followers - with the geometrical properties of the level sets of Koopman eigenfunctions \cite{Mezic:1994,MauroyandMezic:2012,Mauroyetal:2013}. The operator-theoretic approach has been shown capable of detecting objects of key importance in geometric study, such as invariant sets, but doing so globally, as opposed to locally as in the geometric approach. It also provides an opportunity for study of high-dimensional evolution equations in terms of dynamical systems concepts \cite{Mezic:2005,Rowleyetal:2009} via a spectral decomposition, and links with associated numerical methods for such evolution equations \cite{Schmid:2010,Rowleyetal:2009}.

Even the early work in \cite{Mezic:1994,MezicandBanaszuk:2000} and its continuation in \cite{MezicandBanaszuk:2004,Mezic:2005,Rowleyetal:2009} already led to realization that spectral properties, and thus geometrical properties can be {\it learned} from data, thus initiating a strong connection that is forming today between machine learning and dynamical systems communities \cite{li2017extended,yeung2019learning,lusch2018deep,takeishi2017learning}. The key notion driving these developments is that of representation of a -possibly nonlinear - dynamical system as a linear operator on a typically infinite-dimensional space of functions. This then leads to search of linear, finite-dimensional invariant subspaces. In this paper we formalize the concept of {\it dynamical system representation}  enabling study of finite dimensional linear and nonlinear representations, learning, and the geometry of state space partitions. 

\section*{State Space vs. Observables Space}
It is customary, since Poincar\'e, to start the discussion of mathematics of dynamical systems with the notion of the state space.
However, to set the operator theoretic approach properly, it is useful to start with just the primitive notion of a set $M$ of states of a given system. Elements $\m\in M$ are abstract to start with, and the dynamics is the rule that assigns $\m^t\in M$ to $\m$ for any element $t$ of the time set $\mathcal{T}$. The time set can be $\bR$, $\bZ$, but more complicated cases such as $\mathcal{T}=\bZ^2$ can be considered as well.
As we are interested in framing the process of learning and modeling dynamics from data in the composition operator framework, we begin by describing the basic notions of representation of dynamics using functions.
\subsection*{Discrete Dynamical Systems}
The set $\cO$ of all complex functions $f:M\rar \bC$ is called the space of observables. It is  a linear vector space over the field of complex numbers.
A discrete deterministic dynamical system $T$ on $M$ is a map $M\rar M$, and the time set is $\mathcal{T}=\bZ$. For $\m\in M$  the iteration of  the map $\m$ is defined by $\m'=T\m$. 
Any such map defines an operator $U:\cO\rar\cO$ by
\be
Uf(\m)=f\circ T(\m)=f(T\m).
\ee
This is the composition operator associated with $T$, originally defined by Bernard Koopman \cite{Koopman:1931} in the context of measure-preserving transformations. In mathematics literature, it is often called the composition operator \cite{singh1993composition,cowen2019composition}. It is linear, as composition distributes over addition:
\bea
U(c_1f_1+c_2f_2)(\m)&=&(c_1f_1+c_2f_2)(T\m)\nonumber\\
&=&c_1f_1(T\m)+c_2f_2(T\m)\nonumber\\
&=&c_1Uf_1(\m)+c_2Uf_2(\m).
\eea
Ultimately, data is about numbers. We can understand a lot abut the map $T$ on $M$ by collecting data on observables. 
To formalize this, we need the notion of {\it representation}.
\begin{definition}
A finite-dimensional representation $(\f,\F)$ of $T$ in $\cO$ is a set of functions $\f=(f_1,...,f_n)$ and a mapping $\F$ such that
\be
U\f(\m)=\f(T\m)=\F(\f(\m)),
\label{rep}
\ee
where $\F:\bC^n\rar\bC^n$ and $n$ is the dimension of the representation.
If $\f:M\rar \bR^n$ is a real set of functions, then the representation is real. 
\end{definition} 
The image of $M$ in $\bC^n$ under $\f$ - the space $\f(M)$ - is called the {\it state space}. The simplest examples
of state spaces are Euclidean spaces of $n$-tuples of real numbers $\bR^n$. Consider $\f=(f_1,...,f_n)$ such that $f_j(\m)=m_j, \m=(m_1,...,m_n)\in\bR^n$.
Any mapping $\m'=T\m$ on $\bR^n$ has a real representation $\f'=\F(\f)$, where $\F_j(\f)=f_j(T\m)$.
The representation (\ref{rep}) is called {\it linear} provided $\F:\bC^n\rar\bC^n$ is a linear mapping.
Finite-dimensional representations are key to learning dynamical systems from data.\begin{example}\label{ex:torus}
Let $M=\mathbb{T}^2$, the two-dimensional torus, and $T:\mathbb{T}^2\rar\mathbb{T}^2$ the mapping that translates points on the torus by angle $\omega_1$ 
in the direction of rotation around the symmetry axis, and $\omega_2$ in the direction of the cross-sectional circle. Consider  the representation 
\bea
\f&=&(z_1,z_2)^T,\nonumber \\
\F(z_1,z_2)&=&(e^{i\omega_1}z_1,e^{i\omega_2}z_2),
\eea
 where $z_1=e^{i\theta_1}$, and $z_2=e^{i\theta_2}$, $\theta_1$ being the angle along the 
rotational symmetry, and $\theta_2$ angle along the cross sectional circle. We have
\be
\f'=A\f,
\ee
where $A$ is a diagonal matrix with $(e^{i\omega_1},e^{i\omega_2})$ as diagonal elements.
Thus, $(\f,\F)$ is a complex, linear representation of $T$.
\end{example}
Mathematically, one of the key questions in this context is whether a finite-dimensional representation exists. Namely, a set of functions $\f:M\rar \bC$ does not necessarily satisfy $\f\circ T=\F(\f)$. If one considers trajectories $\{\m_j,j\in \bZ \}$ of $T$, then it is easy to see that there isn't necessarily an $\F:\C^{\{J,-\infty\}}\rar \C,\{J,-\infty\}=\{J,J-1,...,J-N,...\}$ such that
\be
\f(\m_{J+1})=\f\circ T(\m_J)=\F(\f(\{\m_j,j\leq J\})),
\ee
 i.e. the next value of $\f$ can not always be be obtained uniquely even if we know the whole history of the evolution of $\f$ on the trajectory. The representation relationship $\f\circ T=\F(\f)$ requires that the next value of $\f$ is uniquely determined by the current value. This is the Markov property.
 If a representation does not satisfy Markov property, but its dynamics depends only on a finite number of previous trajectory points, i.e.
\bea
\f(\m_{J+1})&=&\f\circ T(\m_J)\nonumber \\
&=&\F(\f(\{\m_j,J-N\leq j\leq J\}))\nonumber \\
&=&\F(\f(\m_J),\f(\m_{J-1}),...,\f(\m_{J-N})) \nonumber \\
&=&\F(\f(T^{N}\m_{J-N}),\f(T^{N-1}\m_{J-N}),\nonumber \\
& &...,\f(\m_{J-N}))
\eea
then the so-called time-delay embedding \cite{Takens,Aayels} can be used to make it Markovian: let 
\be
\tilde  \f(\m)=(\f,\f\circ T,...\f\circ T^N)(\m)=(\f,\f_1...\f_N)(\m)
\ee
Then
\bea
\tilde  \f'&=&(\f\circ T(\m),\f\circ T^2(\m),...\f\circ T^{N+1}(\m))\nonumber \\
&=&(\f_1(\m),\f_2(\m),...,\f_N(\m),\f\circ T^{N+1}(\m))\nonumber \\
&=&(\f_1(\m),\f_2(\m),...,\f_N(\m),\f(\m_{N+1}))\nonumber \\
&=&(\f_1(\m),\f_2(\m),...,\f_N(\m),\nonumber \\
& &\F(\f(T^{N}\m),\f(T^{N-1}\m),...,\f(\m))\nonumber \\
&=&\tilde \F(\tilde \f).
\eea

Physically, there is the additional problem of whether experimental observations can provide all the information necessary to describe the finite-dimensional representation.
\subsection*{Representations and Conjugacies}
There are representations that are capable of separating points on $M$. We call these {\it faithful}:
\begin{definition} A representation $(\f,\F)$ is call faithful provided $\f:M\rar\f(M)$ is injective:
\bea
&(i)& \f(\m)= \f(\n) \Rightarrow \m=\n, \mbox{or equivalently}\nonumber \\
&(ii)& \m\neq\n \Rightarrow \f(\m)\neq \f(\n), \forall \m,\n\in M \nonumber
\eea
\end{definition}
In terms of representations, the Takens embedding theorem \cite{takens1981}  shows that a faithful representation of dynamics on  an $m$-dimensional Riemannian manifold can be obtained by using sufficiently large set of time-delayed observables for generic pairs of smooth real functions and dynamical systems $(f,T)$:
\begin{theorem} [Takens]Let $M$ be a compact Riemannian manifold of dimension $m$, $T:M\rar M$ a $C^r, r\geq 1$ diffeomorphism and $f:M\rar \bR$ a $C^r$ function. For generic $(f,T)$ the map $\tilde \f:M\rightarrow\mathbb{R}^{2m+1}$ given\ componentwise by 
\begin{equation*}
\tilde \f(x)=(f,f\circ T,f\circ T^{2},...,f\circ T^{2m}) 
\end{equation*}
is an embedding and $\tilde \f(M)$ is a compact submanifold of $\mathbb{R}
^{2m+1}$.  Thus, for generic $(f,T)$, $(\tilde\f,\F)$ is a faithful real representation of $T$.
\end{theorem}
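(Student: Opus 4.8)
The plan is to deduce the statement from two facts proved separately: (i) for generic $(f,T)$ the map $\tilde \f$ is an immersion, and (ii) for generic $(f,T)$ the map $\tilde \f$ is injective. Granting both, since $M$ is compact an injective immersion is automatically a homeomorphism onto its image, hence an embedding, and $\tilde \f(M)$ is then a compact $C^r$ submanifold of $\mathbb{R}^{2m+1}$ by the local normal form for immersions. I would organize genericity in two stages via Baire category: first pick $T$ in a residual set of $C^r$ diffeomorphisms, then with $T$ fixed pick $f$ in a residual subset of $C^r(M,\mathbb{R})$, and finally combine these by a diagonal-type argument to get a residual set of pairs $(f,T)$. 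For the first stage I would invoke Kupka--Smale--type results: there is a residual set of $T$ such that for every $p\le 2m$ the set of periodic points of period $p$ is finite, each such orbit is nondegenerate, and the eigenvalues of the linearized return map $D(T^{p})_x$ satisfy the nonresonance relations needed later. Fix such a $T$. The key elementary consequence of $T$ being a diffeomorphism (hence injective) is that if $x$ is \emph{not} periodic of period $\le 2m$, then the $2m+1$ sample points $x,Tx,\dots,T^{2m}x$ are pairwise distinct; similarly, for $x\neq y$ the points $T^{j}x$ and $T^{j}y$ differ for each fixed $j$. This is exactly what makes perturbations of $f$ ``free'' away from the short periodic orbits.

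\emph{Immersion.} The differential $D\tilde \f_x$ fails to be injective precisely when there is a unit vector $v\in T_xM$ with $\langle df_{T^{j}x},\,(DT^{j})_x v\rangle=0$ for $j=0,\dots,2m$. Over the open set of non--short-periodic $x$ the sample points are distinct, so $df$ can be prescribed independently at them; a parametric (Thom jet) transversality argument then shows that for generic $f$ no such $(x,v)$ exists, the heuristic dimension count being that the unit tangent bundle has dimension $2m-1$ while we impose $2m+1$ independent conditions, so the bad set drops below dimension zero for the generic $f$. On the finite union of short periodic orbits the delay vector samples $f$ at only finitely many points and the conditions on $v$ become spectral conditions on $D(T^{p})_x$; these are dispatched using the nonresonance built into the first stage together with finitely many further generic conditions on the values and first-order jets of $f$ along those orbits. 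Hence the set of admissible $f$ is residual.

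\emph{Injectivity.} On $W:=(M\times M)\setminus\Delta$, a manifold of dimension $2m$, consider $g_j(x,y):=f(T^{j}x)-f(T^{j}y)$. On the open subset of $W$ of pairs whose combined orbit segments $\{T^{j}x\}\cup\{T^{j}y\}$ avoid the short periodic orbits and are otherwise in general position, the same parametric transversality applies: the $2m+1$ equations $g_0=\dots=g_{2m}=0$ cut out, for generic $f$, a set of heuristic dimension $2m-(2m+1)<0$, hence are inconsistent. The remaining pairs involve only the finitely many short periodic orbits, and on that finite combinatorial data one checks directly that a generic $f$ assigns distinct values in the pattern needed to separate distinct orbits and distinct orbit segments. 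Intersecting the residual $f$-sets from the immersion and injectivity steps, and then intersecting over the first-stage residual set of $T$, produces a residual set of pairs $(f,T)$ for which $\tilde \f$ is an injective immersion; by compactness of $M$ it is an embedding and $\tilde \f(M)$ is a compact submanifold of $\mathbb{R}^{2m+1}$, as claimed.

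The genuine obstacle is the short periodic orbits. Away from them everything is a clean transversality/dimension count made possible solely by invertibility of $T$, which forces the $2m+1$ samples along an orbit segment to be distinct so that varying $f$ moves the delay coordinates in every direction. On and near periodic points of period $\le 2m$ the delay map only ever sees finitely many values of $f$, so the free-perturbation mechanism collapses; one must instead exploit the carefully arranged genericity of $T$ --- finiteness and nondegeneracy of short periodic orbits plus nonresonance of the eigenvalues of the linearized return maps --- to reduce the obstruction to a finite system of generic conditions on $f$. Verifying that this finite system is simultaneously satisfiable, and that the two stages of residual sets combine correctly under Baire category, is where the real work lies, and this is essentially the content of Takens' original argument and its subsequent refinements.
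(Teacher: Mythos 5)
The paper does not prove this theorem: it is stated as a classical result and attributed to Takens \cite{takens1981}, so there is no internal proof to compare your argument against. Your outline does faithfully reproduce the architecture of the standard proof --- split into immersion and injectivity, run a jet-transversality/dimension count on the unit tangent bundle (dimension $2m-1$ versus $2m+1$ conditions) and on $M\times M\setminus\Delta$ (dimension $2m$ versus $2m+1$ conditions), and isolate the periodic orbits of period at most $2m$, which are tamed by a Kupka--Smale-type preliminary perturbation of $T$ (finitely many such orbits, distinct eigenvalues of the linearized return maps) followed by finitely many open-dense conditions on $f$ and $df$ along those orbits.

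That said, as a proof the proposal has genuine gaps, and your closing paragraph concedes as much by deferring the decisive verifications to ``Takens' original argument.'' Three points in particular are not dispatched by the dimension heuristics. First, in the injectivity step the pairs $(x,y)$ with $y=T^{k}x$ for small $k$ are not covered by ``general position'': the two delay vectors then share most of their sample points, so perturbations of $f$ at the $2m+1$ conditions are \emph{not} independent, and one must check separately that the map $f\mapsto(f(T^{j}x)-f(T^{j}y))_{j=0}^{2m}$ is still submersive there; your phrase ``otherwise in general position'' hides exactly this case. Second, the finite system of conditions on the jets of $f$ along the short periodic orbits must be shown to be simultaneously satisfiable and open, which uses the distinctness of the eigenvalues of $D(T^{p})_x$ in an essential way; asserting ``nonresonance'' is not a proof. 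Third, the two-stage Baire argument (residual in $T$, then residual in $f$ for each such $T$) does not by itself yield a residual --- let alone open and dense --- subset of the product space of pairs $(f,T)$, which is what ``generic $(f,T)$'' means in Takens' statement; the passage from fiberwise residual to jointly generic requires the openness of the individual conditions. Finally, note that the jet-transversality machinery needs $f$ and $T$ to be at least $C^{2}$ (Takens assumes this); the $r\ge 1$ hypothesis as stated is already at the boundary of what the transversality argument supports.
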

Time-delayed observables have been used in approximations of Koopman operator theory since \cite{MezicandBanaszuk:2000,MezicandBanaszuk:2004}, followed by \cite{susuki2015prony,ArbabiandMezic:2017,brunton2017chaos}. Even more interestingly, and less obviously, Newtonian mechanics can be re-stated based on the idea of using time delays.
Indeed, Newton's second law for a single particle of constant mass moving on the real line states
\be
m\ddot x=F(x),
\ee
where $x$ is the standard euclidean coordinate function on $\R$. Since for smooth forces and configuration manifold $X$ the space of positions and momenta $M=X\times \bR$ is two dimensional, we can take one time delay to describe Newtonian motion.

A representation might provide redundant information: for example, it might contain two functions $f_j$ and $f_k$ such that $f_j=F(f_k)$ for some  function $F$. 
If it does not, it is called efficient:
\begin{definition} An $n$-dimensional faithful representation is called efficient provided 
there is no $H:\bC^{n-1}\rar \bC$ and $j\in\{1,...,n\}$ such that
\be
f_j=H(f_1,...,f_{j-1},f_{j+1},...,f_n).
\ee
\end{definition}
\begin{example}
In Example \ref{ex:torus} $(\f,\F)$ is a faithful, efficient representation of $T$.
\end{example}
It is clear that all efficient faithful representations have the same dimension. Thus, the dimension of the system can be defined as the dimension of 
its efficient representation\footnote{The underlying  space $M$ can have a fractal dimension -e.g. in the case of the Lorenz attractor - but the representation is integer-dimensional.}.
Additionally, different faithful representations  of the underlying mapping $T$ play nicely with each other as they are related by a conjugacy:
\begin{proposition} Let $(\f,\F),(\g,\G)$ be two different faithful $n$-dimensional representations. Then there is a bijection $\h:\g(M)\rar\f(M)$ such that 
\be
\f=\h(\g).
\ee
and $\h$ is a conjugacy of representations, i.e.
\be
\h(\G(\g))=\F(\h(\g))
\ee 
\label{prop:conj}
\end{proposition}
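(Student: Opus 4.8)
The plan is to construct $\h$ explicitly as $\f\circ\g^{-1}$ and then verify the two asserted properties by simply unwinding definitions; no hard analysis is required, so the "proof" is really a careful bookkeeping argument resting entirely on the faithfulness hypothesis.

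\textbf{Step 1: building $\h$.} Since $(\g,\G)$ is faithful, $\g:M\rar\g(M)$ is a bijection, hence invertible; let $\g^{-1}:\g(M)\rar M$ denote its inverse. Define
\be
\h:=\f\circ\g^{-1}:\g(M)\rar\f(M).
\ee
For every $\m\in M$ we then get $\h(\g(\m))=\f(\g^{-1}(\g(\m)))=\f(\m)$, which is exactly the relation $\f=\h(\g)$. To see $\h$ is a bijection onto $\f(M)$: it is surjective because every point of $\f(M)$ has the form $\f(\m)=\h(\g(\m))$; and it is injective because if $\h(\y)=\h(\y')$ with $\y=\g(\m)$, $\y'=\g(\n)$ in $\g(M)$, then $\f(\m)=\f(\n)$, so faithfulness of $(\f,\F)$ forces $\m=\n$, hence $\y=\y'$. (Equivalently, $\h$ is a composite of the two bijections $\g^{-1}$ and $\f$.)

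\textbf{Step 2: the conjugacy.} Here I would use the defining representation identities of both pairs: $\G(\g(\m))=\g(T\m)$ and $\F(\f(\m))=\f(T\m)$ for all $\m\in M$. First note $\G$ maps $\g(M)$ into itself, since $\G(\g(\m))=\g(T\m)$ and $T\m\in M$; likewise $\F$ maps $\f(M)$ into itself. Hence for any $\y=\g(\m)\in\g(M)$,
\bea
\h(\G(\y))&=&\h(\G(\g(\m)))=\h(\g(T\m))\nonumber\\
&=&\f(T\m)=\F(\f(\m))=\F(\h(\g(\m)))\nonumber\\
&=&\F(\h(\y)),
\eea
which is the claimed identity $\h(\G(\g))=\F(\h(\g))$ on $\g(M)$.

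\textbf{Main obstacle.} There is no genuine analytic difficulty; the only point deserving care is scope. The maps $\F,\G$ are a priori pinned down only through the representation relation on the images $\f(M),\g(M)$, so the conjugacy $\h\circ\G=\F\circ\h$ is meaningful and provable only on $\g(M)$, where $\h$ is defined — off those sets $\F,\G$ are unconstrained and $\h$ need not extend. One should therefore state (and the proof uses) the conjugacy precisely as an identity between maps $\g(M)\rar\f(M)$, not between maps on $\bC^n$; promoting it to ambient spaces would require additional structure (e.g.\ smoothness together with openness of the images) that the statement neither assumes nor needs.
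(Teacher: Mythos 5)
Your proof is correct and follows essentially the same route as the paper's: you construct $\h=\f\circ\g^{-1}$ (the paper calls this "the resulting mapping" without writing the formula), and you verify the conjugacy by the identical chain $\h(\G(\g))=\h(\g\circ T)=\f\circ T=\F(\f)=\F(\h(\g))$. Your closing remark that the conjugacy is only pinned down on $\g(M)$ rather than on all of $\bC^n$ is a sensible clarification, but it does not change the argument.
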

\begin{proof}
Since $\f$ and $\g$ are faithful, for every $\m\in M$ $\f(\m)$ and $\g(\m)$ are unique, and thus for any $\g\in \g(M)$ there is a unique $\f\in \f(M)$. The resulting mapping $\h: \g(M)\rar \f(M)$ is a bijection.
Further, we know 
\be
\h(\g\circ T))=\h(\G(\g))
\ee
Now,
\bea
\h(\g\circ T)&=&\f\circ T\nonumber \\
&=&\F(\f)\nonumber \\
&=&\F(\h(\g)),
\eea
and thus
\be
\h(\G(\g))=\F(\h(\g)).
\ee
\end{proof}
The concept of conjugacy has classically been used in dynamical systems for local linearization theorems \cite{zhang2017differentiability}. Since the Koopman operator description is global, extensions of the local theory are needed, as first provided in \cite{lanandmezic:2012b}. Since then, the concept of conjugacy has been utilized in Koopman operator theory of dynamical systems in a number of papers \cite{williams2015data,Mezic:2019,bollt2018matching,mohr2016koopman,eldering2018global}.
 
Faithful representations are capable of describing all of the dynamics of $T$. However, that dynamics is often high dimensional and has components that
are irrelevant for understanding of the problem at hand. In this context, the notion of the {\it reduced representation} is useful:
\begin{definition} A representation $(\f,\F)$ is called  reduced provided it is not faithful.
\end{definition}
Note that, by the definition of the concept of representation, even for reduced representations we have:
\be
 \f(\m)= \f(\n) \Rightarrow \f(T\m)=\f(T\n),  \forall \m,\n\in M. \nonumber
 \label{eq:valid}
\ee
since, if $ \f(\m)= \f(\n)$ then
\be
\f(T\m)=\F(\f(\m))=\F(\f(\n))=\f(T\n).
\ee
The concept of reduced representations is exemplified in the notion of factors in ergodic theory \cite{Petersen:1983,MezicandBanaszuk:2004}, for which we need to equip $M$ with a measure $\mu$. Let $(\f,\F)$ be a reduced representation of $T:M\rar M$, where the components of  $\f$ are measurable functions on $M$. Then we have
\begin{proposition}
The dynamical system $\f'=\F(\f)$ on $\f(M)$ equipped with the measure $\nu$ defined by $\nu(A)=\mu(\f^{-1}(A))$ is a factor of $T$.
\end{proposition}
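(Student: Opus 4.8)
The plan is to recognize $(\f(M),\F)$ as a factor in the standard ergodic-theoretic sense: I need to exhibit a measurable surjection $\pi:M\rar\f(M)$ with $\pi_{*}\mu=\nu$ and $\pi\circ T=\F\circ\pi$ (mod $\mu$), after which $(\f(M),\nu,\F)$ is a measure-preserving factor of $(M,\mu,T)$ -- here I take the ambient system to be measure preserving, $T_{*}\mu=\mu$, as is implicit in the notion of a factor; without that hypothesis the conclusion reduces to the existence of a measurable semiconjugacy. The obvious choice is $\pi=\f$ and $S=\F|_{\f(M)}$. The first thing I would pin down is the $\sigma$-algebra on $\f(M)$, taking it to be the pushforward $\sigma$-algebra $\mathcal{C}=\{A\subseteq\f(M):\f^{-1}(A)\in\mathcal{B}\}$ (the largest one making $\f$ measurable), which simultaneously makes $\nu(A)=\mu(\f^{-1}(A))$ well defined.

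With that in place I would check, in order: that $\f:M\rar\f(M)$ is a measurable surjection (surjectivity is automatic, measurability is built into $\mathcal{C}$ and also follows from measurability of each $f_{j}$); that $\nu$ is a genuine (probability) measure on $\mathcal{C}$, since $\f^{-1}$ respects countable unions and complements; that $\F$ maps $\f(M)$ into $\f(M)$, because $\F(\f(\m))=\f(T\m)$ by \eqref{rep}; that $\f\circ T=\F\circ\f$ on all of $M$, which is again exactly \eqref{rep}; and finally that $\F$ preserves $\nu$, which I would get from the chain $\nu(\F^{-1}(A))=\mu(\f^{-1}(\F^{-1}(A)))=\mu(T^{-1}(\f^{-1}(A)))=\mu(\f^{-1}(A))=\nu(A)$, the middle equality being the set-level form of \eqref{rep} and the next one being $T$-invariance of $\mu$.

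The step that actually needs care -- the main obstacle -- is measurability of $\F$ as a self-map of $(\f(M),\mathcal{C})$: in the definition of a representation $\F$ is only given as a map $\bC^{n}\rar\bC^{n}$, with no regularity assumed, and $\f(M)$ may be a badly behaved (possibly non-Borel) subset of $\bC^{n}$, so $\F$ need not be Borel on $\f(M)$ with the trace $\sigma$-algebra. The way around it is the identity $\f^{-1}(\F^{-1}(A))=\{\m:\f(T\m)\in A\}=T^{-1}(\f^{-1}(A))$, valid for every $A\subseteq\f(M)$ by \eqref{rep}: for $A\in\mathcal{C}$ the right-hand side lies in $\mathcal{B}$ because $T$ is measurable, hence $\F^{-1}(A)\in\mathcal{C}$. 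This is precisely why one wants the pushforward $\sigma$-algebra rather than the trace Borel one, and it is also what makes the measure-preservation computation above legitimate. If one insists on the more classical setup in which $\f(M)$ is a standard (Lebesgue) space and $\F$ is a Borel map, a little extra regularity on $\f$ (Borel with Borel image, say), or a passage to the measure algebra, is needed; but this is a routine point that does not affect the dynamical content.
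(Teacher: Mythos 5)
Your proof is correct and follows the same route as the paper's own (very terse) proof, which simply records the semiconjugacy $\f\circ T=\F\circ\f$ and the pushforward measure $\nu(A)=\mu(\f^{-1}(A))$ and declares the result. You additionally supply the details the paper leaves implicit — the pushforward $\sigma$-algebra that makes $\F$ measurable on $\f(M)$, and the $T$-invariance of $\mu$ needed for $\F$ to preserve $\nu$ — which strengthens rather than changes the argument.
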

\begin{proof}
We have
\be
\f(T\m)=\F(\f(\m)).
\ee
Since $\f$ is measurable, and $\nu(A)=\mu(\f^{-1}(A))$, $\F$ is a factor of $T$.
\end{proof}
In the context of factors, $\f$ is required to be measurable in contrast with the notion of  semi-conjugacy in topological dynamics,
where the representation is required to be continuous:
\begin{proposition} Let $\f:M\rar \f(M)$ be a continuous (proper) surjection i.e. there are two points in $M$ that map to a single point in $\f(M)$, and $(\f,\F)$ a  (non-faithful) representation. Then $\F$ is  semi-conjugate to $T$.
\end{proposition}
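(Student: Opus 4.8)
The plan is to verify directly that $\f$ realizes $(\F, \f(M))$ as a topological factor of $(T,M)$, so that the only real content is checking that $\F$ restricts to a well‑defined continuous self‑map of the image. First I would recall what is to be shown: saying $\F$ is semi‑conjugate to $T$ means there is a continuous surjection $h:M\rar\f(M)$ with $h\circ T=\F\circ h$. The natural candidate is $h=\f$ itself. By hypothesis $\f$ is continuous and, since the target is taken to be $\f(M)$, it is automatically surjective; and because $\f$ is a proper (non‑injective) surjection, $h$ is genuinely not a homeomorphism, which is precisely why one lands on a semi‑conjugacy rather than a conjugacy (contrast Proposition~\ref{prop:conj}).

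Next I would establish the intertwining relation, which is immediate from the definition of a representation: applying~(\ref{rep}) pointwise gives $\f(T\m)=\F(\f(\m))$ for every $\m\in M$, i.e. $\f\circ T=\F\circ\f$ as maps $M\rar\bC^n$. In particular, for any $\y\in\f(M)$, writing $\y=\f(\m)$, we get $\F(\y)=\F(\f(\m))=\f(T\m)\in\f(M)$, so $\F$ maps $\f(M)$ into itself; its restriction $S:=\F|_{\f(M)}$ is therefore a well‑defined self‑map of $\f(M)$ and satisfies $\f\circ T=S\circ\f$. Note there is no consistency obstruction of the kind appearing for reduced representations in~(\ref{eq:valid}): $\F$ is prescribed from the outset as a genuine function on $\bC^n$, so distinct preimages of a point of $\f(M)$ are automatically sent by $\F$ to the same value.

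The main obstacle is the continuity of $S=\F|_{\f(M)}$; the definition of representation does not by itself force $\F$ to be continuous, so the topological hypotheses on $\f$ must be used. The clean route is to observe that $\f\circ T$ is continuous (a composition of continuous maps, $T$ being continuous as is implicit in the topological‑dynamics setting) and then to push this continuity down through $\f$: if $\f$ is a quotient (identification) map onto $\f(M)$, the universal property of the quotient topology yields a unique continuous map $S$ on $\f(M)$ with $S\circ\f=\f\circ T$, and by the previous paragraph that map is exactly $\F|_{\f(M)}$. It therefore remains to know that $\f$ is a quotient map; this is where ``proper'' is used — either in the genuine topological sense (preimages of compacta are compact), so that $\f$ is closed when $\f(M)$ is locally compact Hausdorff and hence a quotient map, or, reading ``proper'' merely as ``non‑injective'', one would additionally invoke compactness of $M$ (as in the Takens setting) together with Hausdorffness of $\f(M)$ to conclude that $\f$ is closed, hence a quotient map. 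Either way $S$ is continuous.

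Putting the pieces together: $\f:M\rar\f(M)$ is a continuous surjection, $S=\F|_{\f(M)}$ is a continuous self‑map of $\f(M)$, and $\f\circ T=S\circ\f$; hence $\f$ exhibits $(\F,\f(M))$ as a topological factor of $(T,M)$, i.e. $\F$ is semi‑conjugate to $T$. I would close by remarking that the only place extra assumptions enter is the quotient‑map step used to transfer continuity; everything else is formal and parallels the measure‑theoretic factor statement proved just above, with ``measurable'' and ``$\mu$'' replaced throughout by ``continuous'' and the quotient topology.
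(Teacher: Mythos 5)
Your proof is correct and its core is exactly the paper's: the paper's entire argument is the one line that the representation identity $\f(T\m)=\F(\f(\m))$, i.e.\ $\f\circ T=\F\circ\f$, already exhibits $\f$ as the intertwining surjection, so $\F$ and $T$ are semi-conjugate. Everything beyond that in your write-up --- checking that $\F$ maps $\f(M)$ into itself, and especially the quotient-map argument establishing continuity of $\F|_{\f(M)}$ --- is extra care that the paper simply does not take. Your continuity step is a legitimate concern (the definition of a representation places no continuity hypothesis on $\F$, while a topological semi-conjugacy normally demands a continuous factor dynamics), and your route via the universal property of the quotient topology is the right way to close it; but note that it needs hypotheses (properness in the genuine topological sense, or compactness of $M$ with $\f(M)$ Hausdorff) that the proposition as stated does not supply, which you flag honestly. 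So: same approach as the paper at its core, with an added rigor step that the paper leaves implicit and that strictly speaking requires mild extra assumptions.
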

\begin{proof} We again have \be
\f(T\m)=\F(\f(\m)),
\ee
and thus $\F$ and $T$ are semi-conjugate.
\end{proof}
Both of these concepts - factors and semi-conjugacies - are key in model reduction of dynamical systems \cite{MezicandBanaszuk:2004,Mezic:2005,Mezic:2019}.
We discuss continuous time evolutions next
\subsection*{Representations of Continuous Time  Evolution}
In the case of continuous time $t\in \bR$, the evolution on $M$ consists of a group of transformations $T^t$, satisfying 
\be
T^{t+s}\m=T^tT^s\m, \forall t,s\in \bR
\ee
Any such evolution group defines an operator group $U^t:\cO\rar\cO$ by
\be
U^tf(\m)=f(T^t\m), f\in \cal{O}.
\ee

A representation $(\f,\F^t)$ of $T^t$ then consists  of a set of real or complex functions $\f$ and a group of transformations $\F^t$ that satisfy
\bea
\f(T^t\m)=\F^t(\f(\m))
\eea

For fixed $\tau$,  $U^\tau$ is a linear composition operator associated with $T^\tau$.
\begin{definition} A representation $(\f,\F^t)$ is called ordinary differential if it is finite and
\bea
\v(\m)&=&\frac{dU^t\f(\m)}{dt}=\lim_{t\rar 0}\frac{U^t\f(\m)-\f(\m)}{t}\nonumber \\
&=&\lim_{t\rar 0}\frac{\F^t(\f(\m))-\f(\m)}{t}
\eea
exists. In this case, the evolution is represented by a finite set of ordinary differential equations
\be
\dot{\f}=\v(\f).
\ee
\end{definition}
\begin{example}
Consider the set of all the states $\m$ of a mass-spring system, and the real representation $\f=(x(\m),p(\m))$, where $x$ is a numerical function that represents the deviation of the
mass position from the unstretched length of the spring and $p$ is the linear momentum, $p=\mathsf{m}v$ where $\mathsf{m}$ is the mass parameter, assumed constant, and  $v$ is an observable representing change of $x$ with time $t$. Derivatives with respect to time are labeled by $\dot{()}$. Then 
\bea
\dot x&=&p/\mathsf{m} \nonumber \\
\dot p&=&-kx \nonumber \\
\eea
is a two-dimensional, faithful, efficient, ordinary differential representation. Setting $\omega^2=k/m,z=x+ip$, we have a one-dimensional, faithful, efficient, complex representation
\be
\dot z=-i\omega z.
\ee
On the other hand, using energy $E=p^2/2m+kx^2/2$, we obtain a one-dimensional, reduced representation
\be
\dot E=0.
\ee
\end{example}
As the next example shows, simple representations can exist even for strange $M$:
\begin{example}[Lorenz representation] \label{exa:Lor}Let $M$ be the Lorenz attractor, which is a subset of $\bR^3$. Let $f(\m)=(x(\m),y(\m),z(\m))$
where $M$ is viewed as embedded in $\bR^3$ and $ (x(\m),y(\m),z(\m)), \  \m\in M$ are projections of $\m$ on $x,y,z$ axes. Then
\bea
\dot x&=&\sigma (y-x),\nonumber \\
\dot y&=&x(\rho -z)-y,\nonumber\\
\dot z&=&xy-\beta z.
\label{lcLorenz}
\eea
is a 3-dimensional efficient ordinary differential equation representation. Note that the underlying set $M$ is fractal, and yet is possesses a differential representation.
It is of interest to note that the ordinary differential equations (\ref{lcLorenz}) are valid off the set $M$ when it is viewed as embedded in $\bR^3$, 
but from the current point of view, the representation itself is only valid when restricted to $M$. 

The Lorenz representation is reduced and is in fact inexact as far as the full dynamic process it is supposed to represent is concerned: the dynamics it  models is that of a Boussinesq approximation of thermal convection dynamics, reduced by truncating Fourier series expansion of the solution.\end{example}

\subsection*{Infinite Dimensional Field Representations}
In the case where the representation of $T$ is not finite, and thus involves a field of observables, e.g  $\f\in L^2(P)$ for some continuous space $P$ (an example is $P=\bR$),
we speak of a field representation. 
\begin{example}
The scalar vorticity field 
\be
\omega(\x), \x=(x,y)\in A\subset \bR^2,
\ee of a two-dimensional, incompressible, inviscid fluid satisfies the equation
\be
\dot \omega={\cal N} \omega,
\ee
where $N:C^\infty(A)\rar C^\infty(A)$ is given by
\be
N\omega=\u\cdot\nabla \omega.
\ee
where
\be
\u(\x)=\frac{1}{2\pi} \iiint \frac{\omega\k\times(\x-\y)}{\|\x-\y\|^2} d\y,
\ee
and  $\k$ is the unit vector perpendicular to the plane of motion. This is the case of a partial differential representation.
\end{example}
\subsection*{Representations and data}
In general, it is not easy to find faithful representations, and their existence has to be validated experimentally. 
Namely, for a finite set of functions to comprise a representation, equation (\ref{eq:valid}) needs to hold for 
every $\m,\n\in M$. It is also clear that such validation is only possible for a finite set of points $\m,\n \in M$,
and thus there is always uncertainty present. It is easier to show that a representation is {\it not} faithful. For example, 
consider an object moving along a straight line, observed at time $0$.  It is impossible to predict where it will be at some time $\tau$ later
if we do not know its momentum $p$. Thus, the representation that includes only the observable $x$ (we could call it Galileo's)
- that measures the position along the straight line is not faithful. In contrast, the representation $(x,p)$ that contains both the position and momentum observables
is faithful (as long as the object does not interact with any others and thus proceeds moving with constant momentum).
This is Newton's representation, and it leads to ordinary differential equations
\bea
\dot x&=&p/m \nonumber \\
\dot p&=&0.
\eea
 
 Another aspect of representation that is important is its accuracy.
Namely, a set of functions $\f:M\rar \bR^m$ might be such that $|\f(T\m)-\F(\m)|\leq \epsilon$ for some $\epsilon$ that is small with respect to the 
average value of $\f$. This was the case, for example with Newton's representation of the motion under the law of gravity, where 
the orbit of planet Mercury, the closest to the sun, and thus experiencing the largest force of all planets, has a small but measurable deviation from the
motion predicted by the inverse-square law. This was rectified by Einstein's geodesic representation. But there are measurable deviations of motion of 
galaxies from the Einstein's representation, too. Like faithfulness, accuracy of representation is also measurable only up to experimental uncertainty.
 
\subsection*{Representations and  Geometry of State Space Partitions}
The key connection between the notion of representations and geometry is that of a partition.  The collection of disjoint sets $A_\alpha, \alpha \in \bC$ forms a partition $\zeta$ of $M$, iff
\be
\cup_{\alpha\in \bC}A_\alpha=M 
\ee
The partition $\zeta_f$ induced by an observable $f$ is defined by  
\be
\zeta_f=\{B^f_\alpha\in M|B^f_\alpha=\{\m\in M| f(\m)=\alpha, \alpha \in \bC\}\},
\ee
i.e. the sets $B^f_\alpha$ are level sets of $f$ on $M$, indexed over values $\alpha$ in the image of $f$ \cite{Rokhlin:1966}. The {\it product} $\vee$ of two partitions is defined by 
\be
\zeta\vee\xi=\{A\cap B|A\in \zeta,B\in \xi\}
\ee
The finest partition $\pi$ is the partition into individual elements of the set $M$.
\begin{proposition} A representation $(\f,\F)$, where  $\f=(f_1,...,f_n)$, is faithful if and only if the partition 
\be
\zeta_\f=\vee_{j=1,...,n}\zeta_{f_j}
\ee
is the finest partition $\pi$.
\end{proposition}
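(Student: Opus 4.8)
The plan is to unwind both sides to the same statement, namely that every fiber of $\f$ is a singleton. First I would identify the blocks of the product partition explicitly: the block of $\zeta_\f=\vee_{j=1}^n\zeta_{f_j}$ containing a point $\m$ is
\[
A_\m:=\bigcap_{j=1}^n B^{f_j}_{f_j(\m)}=\{\n\in M: f_j(\n)=f_j(\m)\ \text{for all}\ j\}=\{\n\in M:\f(\n)=\f(\m)\}=\f^{-1}(\f(\m)),
\]
i.e.\ the $\f$-fiber over $\f(\m)$. Every nonempty block of $\zeta_\f$ is of this form: a nonempty block is $\bigcap_j B^{f_j}_{\alpha_j}$ for some values $\alpha_j$, and choosing any point $\m$ in it forces $\alpha_j=f_j(\m)$, so the block equals $A_\m$. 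Also recall that the finest partition $\pi$ is, by definition, the one all of whose blocks are singletons.

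For the forward direction, assume $(\f,\F)$ is faithful, so $\f$ is injective. Take any nonempty block $A$ of $\zeta_\f$ and pick $\m\in A$; by the computation above $A=\f^{-1}(\f(\m))$, and injectivity forces $\f^{-1}(\f(\m))=\{\m\}$. Hence every block of $\zeta_\f$ is a singleton, so $\zeta_\f=\pi$.

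For the converse, assume $\zeta_\f=\pi$ and suppose $\f(\m)=\f(\n)$ for some $\m,\n\in M$. Then $\n\in\f^{-1}(\f(\m))=A_\m$, the block of $\zeta_\f$ containing $\m$; since $\zeta_\f=\pi$, this block is $\{\m\}$, whence $\n=\m$. Thus $\f$ is injective and $(\f,\F)$ is faithful, completing the equivalence.

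There is no serious obstacle here; the only point requiring care is the identification of the blocks of the product partition with the fibers of $\f$ — in particular, passing to a chosen representative point $\m$ so that the index values $\alpha_j$ are pinned to $f_j(\m)$, and noting that empty blocks in the indexing play no role. I would also remark that the map $\F$ never enters the argument: faithfulness is a property of $\f$ alone, and the proposition is really a statement about joint level sets of a finite family of observables.
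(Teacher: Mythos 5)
Your proof is correct and follows essentially the same route as the paper's: both reduce the statement to the observation that the blocks of $\zeta_\f$ are exactly the fibers of $\f$, so that $\zeta_\f=\pi$ is equivalent to injectivity of $\f$. Your write-up is simply more explicit about identifying the blocks of the product partition (the paper phrases the forward direction as a one-line contradiction), and your closing remark that $\F$ plays no role is accurate.
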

\begin{proof}
Assume $(\f,\F)$ is faithful and $\pi\neq \vee_{j=1,...,n}\zeta_{f_j}$. Then there are two points in $M$ that have the same values of $\f$ associated with them and we get a contradiction. Conversely, assume that $\pi=\vee_{j=1,...,n}\zeta_{f_j}$. Then, the partition is faithful as every point  of $M$ gets assigned a unique set of values of observables $f_1,...,f_n$
\end{proof}

\section*{Eigenfunctions and Linear Representations}
An eigenfunction $\phi$ of the composition operator associated with $T$ and  $T^t$ respecively satisfies
\bea
U\phi&=&\lambda \phi.\\
U^t\phi &=&e^{\lambda t} \phi
\eea
where $\lambda$ is the associated eigenvalue.
In discrete time, let $\F(\phi)=\lambda \phi.$ Thus, $(\phi,\F)$, is a (possibly reduced) representation of $T$. More generally, let $(\f,\F)$ be an $m$-dimensional representation of $T$ such that
$\F(\f)=A\f$ where $A$ is an $m\times m$ matrix. We denote such a linear representation by $(\f,A)$. Since we have
\be
U\f(\m)=\f\circ T(\m)=A\f(\m).
\ee
we call $A$ an eigenmatrix of $U$. Similar definition holds for the continuous time case, where we require
\be
U^t\f(\m)=\f\circ T^t(\m)=e^{At}\f(\m).
\ee
for $A$ to be an eigenmatrix of $U^t$. In the differentiable case, we get
\be
\frac{dU^t\f(\m)}{dt}|_{t=0}=A\f(\m).
\ee

Assume that $A$ has distinct eigenvalues. Let $\left<\cdot,\cdot\right>$ denote the standard complex inner product on $\bC^m$, defined by
$$
\langle\v,\w\rangle=\sum_i \v_i \cdot \w_i^c
$$
Then
\begin{proposition}
The eigenvalues of $A$, $\lambda_1,...,\lambda_m$ are eigenvalues of $U$, and the associated eigenfunctions of $U$ are given by
\be
\phi_j=\left<\f,\w_j\right>,
\ee
where $w_j$ is the eigenvector of $A^T$.
\end{proposition}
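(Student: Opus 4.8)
The plan is to prove the statement by a direct computation, pushing the linear operator $U$ through the linear combination $\phi_j=\inn{\f}{\w_j}$. First I would record the representation relation componentwise: writing $\f=(f_1,\dots,f_m)^T$, the identity $U\f(\m)=A\f(\m)$ reads $Uf_i=\sum_{k=1}^m A_{ik}f_k$ for each $i$. Since $A$ has distinct eigenvalues, $A$ and its transpose $A^T$ are diagonalizable with one-dimensional eigenspaces, so for each $j$ there is a vector $\w_j\in\bC^m$, unique up to a scalar, with $A^T\w_j=\lambda_j\w_j$, and $\{\w_1,\dots,\w_m\}$ is a basis of $\bC^m$.

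The core step is then the following rearrangement. Because $U$ is linear and the numbers $\overline{(\w_j)_i}$ do not depend on $\m\in M$, they may be pulled out of $U$, so that
\[
U\phi_j=\sum_{i}\overline{(\w_j)_i}\,Uf_i=\sum_{i}\overline{(\w_j)_i}\sum_{k}A_{ik}f_k=\sum_{k}\Big(\sum_i A_{ik}\overline{(\w_j)_i}\Big)f_k .
\]
The inner coefficient is, up to conjugation, the $k$-th component of $A^T\w_j$, so the eigenvector identity collapses the bracket to an eigenvalue times $\overline{(\w_j)_k}$ and gives $U\phi_j=\lambda_j\phi_j$. One bookkeeping point deserves care: with the conjugate-linear convention $\inn{\v}{\w}=\sum_i\v_i\w_i^{c}$ used here, the sum $\sum_i A_{ik}\overline{(\w_j)_i}$ equals $\overline{\lambda_j}\,\overline{(\w_j)_k}$ when $A$ is real, so to land on $\lambda_j$ exactly one should take $\w_j$ to be the eigenvector of $\overline{A}^{\,T}$ for $\overline{\lambda_j}$ --- equivalently, the left eigenvector $\overline{\w_j}^{\,T}A=\lambda_j\,\overline{\w_j}^{\,T}$. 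For real eigenmatrices $A$ the spectrum is closed under conjugation, so this coincides with ``the eigenvector of $A^T$'' after the harmless relabeling $\lambda_j\leftrightarrow\overline{\lambda_j}$; I would phrase the proposition in whichever of these equivalent forms keeps the algebra cleanest.

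It remains to check that the $\phi_j$ are genuine (nonzero) eigenfunctions. If $\phi_j\equiv 0$ then $\sum_i\overline{(\w_j)_i}f_i\equiv 0$ is a nontrivial linear relation among $f_1,\dots,f_m$; hence, as soon as the observables $f_i$ are linearly independent as functions on $M$ --- which holds in particular when $(\f,A)$ is efficient --- every $\phi_j\not\equiv 0$. Since $\{\w_j\}$ spans $\bC^m$, the $\phi_j$ span the same function space as the $f_i$, so they furnish $m$ linearly independent eigenfunctions of $U$ for the $m$ distinct values $\lambda_1,\dots,\lambda_m$, which in particular shows each $\lambda_j$ is an eigenvalue of $U$.

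On the difficulty: there is essentially no hard step, the heart being a one-line interchange of finite sums together with the definition of the transpose. The two places that call for attention are the complex-conjugation convention, which dictates whether the relevant dual matrix is $A^T$ or $\overline{A}^{\,T}$, and the exclusion of the trivial case $\phi_j\equiv0$, for which one uses linear independence of the observables $f_i$ (automatic, e.g., for an efficient representation), with distinctness of the eigenvalues then additionally ensuring that the $m$ eigenfunctions so produced are themselves linearly independent. The proposition is thus a bookkeeping fact: a finite-dimensional linear representation $(\f,A)$ is, via the dual eigenvectors of $A$, a direct sum of one-dimensional Koopman eigenfunction representations $(\phi_j,\lambda_j)$.
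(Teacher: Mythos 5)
Your proof is correct and follows essentially the same route as the paper, whose computation is $U\phi_j=\left<\f\circ T,\w_j\right>=\left<A\f,\w_j\right>=\left<\f,A^T\w_j\right>=\left<\f,\lambda_j^c\w_j\right>=\lambda_j\phi_j$ --- the same transpose manipulation you carry out componentwise. The two points you flag are genuine but glossed over in the paper: its own chain in fact takes $A^T\w_j=\lambda_j^c\w_j$ (i.e.\ $\w_j$ is the eigenvector of $A^T$ at the \emph{conjugate} eigenvalue, exactly your conjugation bookkeeping, and the step $\left<A\f,\w\right>=\left<\f,A^T\w\right>$ likewise presumes $A$ real), and it never checks $\phi_j\not\equiv 0$, which your linear-independence argument for efficient representations supplies and which is needed for $\lambda_j$ to actually be an eigenvalue of $U$.
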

\begin{proof} In discrete time, we have
\bea
U\phi_j(\m)&=&\phi_j(T(\m))=\left<\f (T(\m)),\w_j\right>\nonumber \\
&=&\left<A\f(\m),\w_j\right>=\left<\f(\m),A^T\w_j\right>\nonumber \\
&=&\left<\f(\m),\lambda_j^c\w_j\right>=\lambda_j \left<\f(\m),\w_j\right>\nonumber \\
&=&\lambda_j \phi_j(\m). \nonumber
\eea
The proof for continuous time proceeds in a similar way.
\end{proof}
\begin{example}
The system $\dot x=f(x),x\in \bR^1, f\in C^1$  always has an eigenfunction satisfying $\dot \phi=\lambda\phi$
provided $\exp({\lambda\int_{x_0}^xdx/f(x)})\in C^1$ for some $\lambda<\infty$. The representation $(\phi,\G^t)$ is linear, where $\G^t(\phi)=e^{\lambda t}\phi$, and $\dot \phi=\lambda \phi$.
\end{example}
 Perhaps the most important and simplest representation, if it exists,
is the eigenfunction representation, given in discrete time by
\be
\f(T\m)=\Lambda(\f(\m)),
\label{rep1}
\ee
where $\Lambda$ is a diagonal $m\times m$ matrix and the diagonal elements $\Lambda_{jj}=\lambda_j,j=1,...,m$ are eigenvalues of $U,$
satisfying
\be
Uf_j=\lambda_jf_j,
\ee
where $f_j$ is  the eigenfunction of $U$ associated with the eigenvalue $\lambda_j$.

Let $A$ be an efficient faithful representation. Let $P$ be a diagonalizing matrix such that $P^{-1}AP=\Lambda$, where $\Lambda$ is a diagonal matrix, and $\bm{\phi}=(\phi_1,...,\phi_m)$. than $(\bm{\phi},\Lambda),$ is a linear representation conjugate to $(\f,A)$. This is of interest because it leads to the following corollarry:
\begin{corollary}
For any  linear diagonalizable finite-dimensional representation $(\f,\F)$ (respectively  $(\f,\F^t)$) of $T$ (respectively $T^t$), $\f$ is in the span of $n$ eigenfunctions $\bm{\phi}=(\phi_1,\phi_2,...,\phi_n)$ of $U$ (respectively $U^t$), where $n$
is the dimension of the representation.
\end{corollary}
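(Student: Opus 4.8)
The plan is to push the diagonalization of the eigenmatrix down to the level of functions, using nothing beyond the linearity of the composition operator. Writing the linear representation as $(\f,A)$, by definition $U\f(\m)=\f\circ T(\m)=A\f(\m)$, which read componentwise says $Uf_i=\sum_{k}A_{ik}f_k$ for $i=1,\dots,n$. Since $A$ is diagonalizable, I would choose an invertible $P$ with $P^{-1}AP=\Lambda$, where $\Lambda=\operatorname{diag}(\lambda_1,\dots,\lambda_n)$ collects the eigenvalues of $A$ (here only diagonalizability, not distinctness of the $\lambda_j$, is needed). Then I would define the $n$ functions $\bm{\phi}=(\phi_1,\dots,\phi_n)$ on $M$ by $\bm{\phi}=P^{-1}\f$, i.e. $\phi_j=\sum_k (P^{-1})_{jk}f_k$; these are precisely the functions $\phi_j=\langle\f,\w_j\rangle$ produced by the preceding Proposition in the case of distinct eigenvalues, the rows of $P^{-1}$ being left eigenvectors of $A$, equivalently eigenvectors of $A^{T}$.

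The verification is then a one-line computation. Because $U$ is linear it commutes with the constant matrix $P^{-1}$, so
\[
U\bm{\phi}=P^{-1}U\f=P^{-1}A\f=(P^{-1}AP)(P^{-1}\f)=\Lambda\bm{\phi},
\]
which componentwise reads $U\phi_j=\lambda_j\phi_j$; hence every nonzero $\phi_j$ is an eigenfunction of $U$ with eigenvalue $\lambda_j$. Undoing the change of variables gives $\f=P\bm{\phi}$, i.e. $f_i=\sum_j P_{ij}\phi_j$ for each $i$, so every component of $\f$, and therefore $\f$ itself, lies in $\spn\{\phi_1,\dots,\phi_n\}$. For the continuous-time statement I would repeat the argument verbatim with $U$ replaced by $U^t$ and $A$ by $e^{At}$: from $P^{-1}e^{At}P=e^{\Lambda t}$ one obtains $U^t\phi_j=e^{\lambda_j t}\phi_j$, and the same spanning conclusion holds.

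The parts that need care — and the closest thing to an obstacle — are bookkeeping rather than analytic. First, a given $\phi_j$ may vanish identically (for instance when a component of $\f$ lies entirely in a non-maximal eigenspace), in which case it is not literally an eigenfunction; but the spanning statement is unaffected, since such a $\phi_j$ can be discarded, and the list padded back to length $n$ by any eigenfunction of $U$ if one insists on exactly $n$. Second, it is exactly the linearity axiom for $U$ (and $U^t$) that licenses commuting the constant matrix $P^{-1}$ through the operator, and this is the sole place the hypothesis enters on the function side. Third, diagonalizability of $A$ — not merely the existence of eigenvalues — is what simultaneously makes $P$ invertible (so the transform can be inverted) and guarantees that these $n$ functions already suffice; for a non-diagonalizable eigenmatrix the same scheme would only yield generalized eigenfunctions. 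No genuine difficulty beyond this arises.
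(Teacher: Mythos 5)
Your proposal is correct and follows essentially the same route as the paper: the paper's (implicit) proof is exactly the diagonalization $P^{-1}AP=\Lambda$ stated just before the corollary, together with the preceding proposition identifying $\phi_j=\langle\f,\w_j\rangle$ (the rows of $P^{-1}$ being the left eigenvectors of $A$) as eigenfunctions of $U$, so that $\f=P\bm{\phi}$ gives the spanning claim. Your write-up is in fact slightly more careful than the paper's, since you need only diagonalizability rather than distinct eigenvalues and you flag the degenerate case of an identically vanishing $\phi_j$.
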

\begin{proposition} 
 Consider a linear, diagonalizable, finite-dimensional representation $(\f,\F)$ (respectively  $(\f,\F^t)$) of $T$ (respectively $T^t$), and another 
  finite-dimensional representation $(\g,\G)$ (respectively  $(\g,\G^t)$) of $T$ (respectively $T^t$). Let $\h$ be a homeomorphism between them, $\f=\h(\g)$.
  Then $\g=\h^{-1}(B\bm{\phi})$ for some matrix $B$ of full rank.
\end{proposition}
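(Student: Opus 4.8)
The plan is to reduce the statement to the diagonalizing change of coordinates already used to establish the preceding Corollary, and then transport the identity $\f=\h(\g)$ through $\h^{-1}$. First I would write the linear representation as $\F(\f)=A\f$ (respectively $\F^t(\f)=e^{At}\f$) with $A$ an $m\times m$ diagonalizable eigenmatrix of $U$, where $m$ is the dimension of $(\f,\F)$, and pick an invertible $P$ with $P^{-1}AP=\Lambda$ diagonal, $\Lambda_{jj}=\lambda_j$. Setting $\bm{\phi}=P^{-1}\f$, the representation property $\f\circ T=A\f$ gives $\bm{\phi}\circ T=P^{-1}A\f=P^{-1}AP\,\bm{\phi}=\Lambda\bm{\phi}$, i.e. $U\phi_j=\lambda_j\phi_j$, so the components of $\bm{\phi}$ are Koopman eigenfunctions; in continuous time the same computation with $e^{At}$ and $e^{\Lambda t}$ shows $U^t\phi_j=e^{\lambda_j t}\phi_j$. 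This is exactly the content already recorded around the Corollary.

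Next, inverting the change of coordinates yields $\f=P\bm{\phi}$, so $\f$ lies in the span of the eigenfunctions $\bm{\phi}$ with coefficient matrix $P$, which is $m\times m$ and invertible, hence of full rank. Substituting into the hypothesis $\f=\h(\g)$ gives $\h(\g)=P\bm{\phi}$. Since $\h:\g(M)\rar\f(M)$ is a homeomorphism, in particular a bijection, $\h^{-1}$ is defined on all of $\f(M)$; and since $P\bm{\phi}(\m)=\f(\m)\in\f(M)$ for every $\m\in M$, we may apply $\h^{-1}$ to obtain $\g=\h^{-1}(P\bm{\phi})$. Taking $B=P$ completes the argument.

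I expect no real obstacle beyond bookkeeping. The two points deserving care are: (i) verifying that the components of $\bm{\phi}=P^{-1}\f$ are genuine eigenfunctions of $U$ (respectively $U^t$), which is immediate from diagonalizability together with $\f\circ T=A\f$ and is already in hand from the earlier discussion; and (ii) noticing that $\h^{-1}$ is legitimately evaluated at $P\bm{\phi}$ because the range of $P\bm{\phi}$ is precisely $\f(M)$, the domain of $\h^{-1}$. It is also worth remarking that the dimension of the second representation $(\g,\G)$ plays no role: the matrix $B$ is always the $m\times m$ diagonalizer $P$ of the linear representation $(\f,\F)$, so in particular it is square and $B$ full rank simply means $B$ invertible.
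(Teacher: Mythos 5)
Your proof is correct and follows essentially the same route as the paper: the paper simply invokes the preceding Corollary to write $\f=B\bm{\phi}$ with $B$ of full rank and then applies $\h^{-1}$, whereas you additionally unpack that Corollary by exhibiting $B$ as the diagonalizing matrix $P$ with $\bm{\phi}=P^{-1}\f$. The extra care you take about where $\h^{-1}$ is defined is sound but not something the paper dwells on.
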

\begin{proof}
Since $\f$ is in the span of $\bm{\phi}$, 
\be
\f=B\bm{\phi},
\ee
and $B$ is of full rank.
Since $\g=\h^{-1}\f=\h^{-1}(B\bm{\phi})$ we are done.
\end{proof}

\subsection*{Algebra of Eigenfunctions}
Eigenfunctions of $U$ form an algebra: let $\phi,\psi$ be eigenfunctions of $U$ with the associated eigenvalues $\lambda,\beta$.
Then $\phi\psi$ is also an eigenfunction, with eigenvalue $\lambda \beta$:
\be
U\phi\psi=(\phi\psi)\circ T=\phi \circ T\psi\circ T=\lambda \beta\phi\psi.
\label{alg}
\ee
 Thus, any  efficient representation $(\f,A)$ leads to many non-efficient representations that can be obtained by adding products of eigenfunctions into the set of representation functions.

\section*{Eigenfunctions, Geometry and Stability}
The above discussion leads to the conclusion that eigenfunctions of the Koopman family of operators $U^t$ are useful in the context in representations - not only is the
representation consisting of eigenfunctions linear, it is also fully decoupled, as each eigenfunction $\phi$ satisfies
\be
\dot \phi=\lambda \phi,
\ee
where  $\lambda$ is the associated eigenvalue. In the discrete-time case, eigenfunctions of $U$ similarly satisfy 
\be
\phi'=\phi\circ T=\lambda \phi.
\ee
The more general notion is that of a function $\phi^k$, such that for discrete maps $(U-\lambda I)^k\phi^k=0$, and for continuous time $(U^t-e^{\lambda t} I)^k\phi^k=0$. Such a function is called a generalized eigenfunction \cite{Mezic:2020}. Clearly, 
eigenfunctions satisfy such equations for $k=1$. Using generalized eigenfunctions, for a linear representation $(\f,A)$ we get a linear system
\be
\dot{\bm{\phi}}=J\bm{\phi}.
\ee
where $J$ is the Jordan canonical form matrix, and $\bm{\phi}=(\phi_1^1,...,\phi_1^{k_1},...,\phi_l^1,...,\phi_l^{k_l})$, $l$ being the number of
distinct eigenvalues, and $k_j$ the algebraic multiplicity of  eigenvalue $\lambda_j$.
\subsection*{Level Sets of Eigenfunctions and Invariant Partitions}
Level sets of eigenfunctions provide insight into geometry of the state space of a dynamical system. Consider an eigenfunction $\phi$ for $T:M\rar M$ at eigenvalue $1$.
It satisfies, in discrete and continuous time, respectively
\bea
\phi\circ T(\m)&=&\phi(\m) \nonumber \\
\dot \phi(\m)&=&0.
\eea
Therefore, $\phi$ is invariant under the dynamics of $T$, and its level sets, defined by $\phi=c,$ for some constant $c\in \bC$ are {\it invariant sets}.
Thus, learning of linear representations from data enables learning of invariant sets of the underlying system.  The partition $\zeta_\phi$ into the level sets of $\phi$ is an example of a {\it fixed partition}, since for any set $\A\in \zeta_\phi$, in discrete time $TA=A$. The finest such partition is the ergodic partition \cite{neumann1932,Rokhlin:1966,Mane:1987,Mezic:1994,MezicandWiggins:1999,susuki2018uniformly} that has interesting additional metric properties. The mapping $\phi:M\rar \C$ also defines a {\it fixed factor} of $T$, whose domain is $\phi(M)$ and its dynamics being trivial, and given by $\phi'=\phi$.
\begin{example} Consider the set of states $\m\in M=S^1\times \bR$ of a pendulum of mass $m$. Let $\theta\in [0,2\pi)$ and define  representation functions  $\theta(\m),\omega(\m)=\dot \theta(\m)\in \bR$.
We have the ordinary differential representation
\bea
\dot \theta&=&\omega  \nonumber \\
\dot \omega&=&-\frac{g}{l} \sin\theta \\
\eea
with $g$ the acceleration of gravity, $l$ the length of the pendulum and ${\cal I}m$ denoting the imaginary part of a complex number. Let 
\be
H=\omega^2/2+\frac{g}{l}\cos(\theta),
\ee
and $\dot H=0$. Thus, the Hamiltonian $H$ is an eigenfunction of the Koopman operator associated with pendulum dynamics.  Its level sets are invariant. The level sets of the Hamiltonian for $1$ degree of freedom systems form the ergodic partition, but this is not the case for $N$ degree of freedom Hamiltonian systems, since e.g. tori with irrational rotation dynamics can have half the dimension of the state space.
\end{example}
The eigenfunction $\phi_\omega$ of $T$ corresponding to an eigenvalue $\lambda=e^{i\omega}\neq 1$  on the unit circle yields level sets that form an {\it invariant partition}. Namely if $A\in \zeta_{\phi_\omega}$, then $TA=B \in \zeta_{\phi_\omega}$. If $n\omega=2\pi m$ where $m,n\in \bZ$ then $T^nA=A$ for every $A\in  \zeta_{\phi_\omega}$. In that case $\zeta_{\phi_\omega}$ is a {\it periodic partition}. The same analysis holds for continuous-time systems in the case when the eigenvalue is $\lambda=i\omega$, on the imaginary axis. For limit cycling systems, with limit cycling frequency $\omega$, there exists an eigenfunction $\phi_\omega$, the level sets of which satisfy
\be
\dot \phi_\omega=i\omega \phi_\omega.
\ee
Such level sets are isochrons \cite{winfree1974patterns,guckenheimer1975isochrons,MauroyandMezic:2012}. The notion of generalized isochrons in dynamical systems with a toroidal attractor with diophantine irrational rotation dynamics stems for further examination of partitions induced by imaginary axis eigenvalues \cite{MauroyandMezic:2012}.

More generally, consider an eigenfunction of $U$ that satisfies
\be
\phi'=\lambda \phi
\ee
with $|\lambda|<1$, or eigenfunction of $U^t$ that satisfies
\be
\dot \phi=\lambda  \phi,
\ee
for $\lambda \in \bC^-$, the left half plane (excluding the imaginary axis). Then necessarily, $\phi(t)\rar 0$ as $t\rar \infty$. The level sets of $\phi$ again form a {\it partition} of $M$ that is invariant. Namely the collection of disjoint sets $A_\alpha, \alpha \in \bC$ such that $A_\alpha=\{\m\in M| \phi(\m)=\alpha, \alpha \in \bC\}$ forms a partition of $M$, such that
\bea
\cup_{\alpha\in \bC}A_\alpha&=&M \nonumber \\
T^tA_\alpha&=&A_{ \alpha e^{ \lambda t}}
\eea
The second property indicates $A_\alpha$ is  an invariant partition under $T^t$. 
Level sets of Koopman eigenfunctions always provide us with invariant partitions of the state space.
\begin{proposition}
Let $\phi$ be a Koopman eigenfunction of a continuous time system $\dot \x=\F(\x)$ on $M\subset\R^n$ with the flow $T^t:M\rar M$, or of a map $T:M\rar M$. Then the level sets $A_\Phi$ of $\phi$
\be
A_\Phi=\{\x\in M| \phi(\x)=\Phi\},
\ee
where $\Phi\in \bC$ are elements of the invariant partition $\zeta_\phi=\{A_\Phi| \phi(\x\in \A_\Phi)=\Phi\}$.
\end{proposition}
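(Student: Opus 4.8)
The plan is to verify the two defining properties of an invariant partition in turn — that $\zeta_\phi$ is a partition of $M$, and that the dynamics carries it into itself — each of which is an immediate consequence of $\phi$ being a single-valued function together with the eigenfunction relation.

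First I would check that $\zeta_\phi$ is a partition. For every $\x\in M$ the value $\phi(\x)\in\bC$ is well defined, so $\x\in A_{\phi(\x)}$ and hence $\cup_{\Phi\in\phi(M)}A_\Phi=M$; and if $\x\in A_{\Phi_1}\cap A_{\Phi_2}$ then $\Phi_1=\phi(\x)=\Phi_2$, so distinct level sets are disjoint. Thus $\{A_\Phi\mid \Phi\in\phi(M)\}$ meets the definition of a partition of $M$ given above. Next, invariance: in continuous time $\phi$ satisfies $U^t\phi=e^{\lambda t}\phi$, i.e. $\phi(T^t\x)=e^{\lambda t}\phi(\x)$, so if $\x\in A_\Phi$ then
\be
\phi(T^t\x)=e^{\lambda t}\phi(\x)=e^{\lambda t}\Phi,
\ee
whence $T^t\x\in A_{e^{\lambda t}\Phi}$ and $T^tA_\Phi\subseteq A_{e^{\lambda t}\Phi}$; applying the same identity to $T^{-t}$ gives equality $T^tA_\Phi=A_{e^{\lambda t}\Phi}$. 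Since $\Phi\mapsto e^{\lambda t}\Phi$ is a bijection of $\bC$, the flow permutes the blocks of $\zeta_\phi$ among themselves, which is the assertion. The discrete-time case is the same argument with $e^{\lambda t}$ replaced by the eigenvalue $\lambda$ of $U$: from $\phi\circ T=\lambda\phi$ one obtains $TA_\Phi\subseteq A_{\lambda\Phi}$, with equality when $T$ is invertible.

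I do not expect a genuine obstacle here; the proposition is essentially bookkeeping that unifies the special cases ($|\lambda|=1$, $\lambda$ imaginary, $|\lambda|<1$, the eigenvalue-$1$ fixed partition) already treated in the discussion preceding the statement. The only points deserving a word of care are that the index map induced on $\bC$ by one step of the dynamics is a bijection, so that the partition is genuinely permuted rather than merely coarsened, and that for a noninvertible map $T$ one should phrase invariance as ``each block is mapped into a single block'' ($TA_\Phi\subseteq A_{\lambda\Phi}$), which still suffices for $\zeta_\phi$ to be invariant.
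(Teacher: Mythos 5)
Your argument is correct and follows essentially the same route as the paper's proof: the eigenfunction relation $\phi(T^t\x)=e^{\lambda t}\phi(\x)$ shows that every point of $A_\Phi$ lands in the single level set $A_{e^{\lambda t}\Phi}$, which is exactly the paper's one-line core step. The extra bookkeeping you supply (that the level sets genuinely form a partition, and that for noninvertible $T$ invariance should be read as $TA_\Phi\subseteq A_{\lambda\Phi}$) is a reasonable tightening of details the paper leaves implicit, not a different method.
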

\begin{proof}
We will do the proof for the continuous time case. By definition, $\phi$ satisfies
\be
\dot \phi=\lambda \phi
\ee
where $\lambda$ is the eigenvalue associated with $\phi$. Therefore
\be
\phi(T^t\x)=e^{\lambda t}\phi(\x),
\ee
and letting $\phi(\x)=\Phi$, we get $\phi(T^{t}\x)=e^{\lambda t}\Phi$. Since $e^{\lambda t}\Phi$ does not depend on $\x$, all the points in the set $A_\Phi$ get mapped into 
$A_{e^{\lambda t}\Phi}$ by $S^{t}, t\in \bR$ and thus $\zeta_\phi$ is an invariant partition. The proof in the discrete time case is similar.
\end{proof}
 Invariant partitions can be recurrent and non-recurrent:
\begin{definition} A recurrent invariant partition  \index{Recurrent invariant partition} \index{ Invariant partition! recurrent} of the state space is an invariant partition such that for any set A in it there is no $\tau$ such that $d(S^tA,A)\geq \eps>0,$ for some $\eps$ and all $t>\tau>0$. Here $d(A,B)$ is the Hausdorff distance of sets $A$ and $B$. An invariant partition that is not recurrent is called nonrecurrent.
\end{definition}
In other words, given an $\eps>0$, for any set $A$ in a recurrent partition, and for any $\tau,$ there is a time $t>\tau$ such that $d(S^tA,A)< \eps.$ Fixed and periodic partitions are clearly recurrent. So are partitions associated with an eigenvalue on the unit circle (discrete time) or imaginary axis (continuous time) where $\omega \neq 2\pi m/n$ for any $m,n \in \bZ$. 
\begin{example} \label{exa:lc}Consider the system   
\bea
\dot r&=& r(1-r^2)\nonumber \\
\dot \theta&=&\omega.
\eea
 The level sets of $r$ comprise an invariant non-recurrent partition for the system. 
However, $r$ is not an eigenfunction of the system. If we map every level set of $r$ into a single point we obtain the quotient space \index{Quotient space} $Q=[0,\infty)$. However the dynamics ``induced" on it by the mapping $r:M\rar R$ from the state space to $r$ is nonlinear. Using the eigenfunction of the system given by $\phi=(x^2-1)/x^2$ corresponding to the eigenvalue $-2$ (which is also the Floquet stability exponent for the limit cycle) we obtain linear dynamics $\dot \phi =-2\phi$ on $Q$. The invariant partition of $\bR^2$ into level sets of $\phi$ is nonrecurrent. The invariant partition into level sets of $\phi_\omega=e^{i2\pi \theta}$ corresponding to eigenvalue $\lambda=i\omega$ is recurrent (periodic).
\end{example}
The numerical study of invariant sets and invariant partitions using Koopman operator theory was initiated in \cite{Mezic:1994} and continued in \cite{MezicandWiggins:1999,levnajic2010ergodic,levnajic2015ergodic,das2017quantitative} for measure-preserving systems, where joint level sets of  time averages of continuous functions - that are elements of the eigenspace of $U$ at $1$ -  were used to compute invariant partitions. The figure \ref{Ergpart} from \cite{levnajic2015ergodic} shows numerical approximation using such time averages, which are part of Generalized Laplace Analysis, the larger computational framework in Koopman operator  theory (see below for more details). The mapping $T$ is  the standard map \cite{chirikov1979universal} on the 2-torus. We consider it in the form: 
\begin{equation}
\begin{array}{lllc}
  x' &=& x + y  + \eps \sin (2 \pi x)  \;\;\;  &[mod \; 1]  \\
  y' &=& y + \eps \sin (2 \pi x)  \;\;\;  &[mod \; 1]
\end{array}
\label{SM}
\end{equation}
where $(x,y) \in [0,1] \times [0,1] \; \equiv [0,1]^2$.  It is an area-preserving (symplectic) map which exhibits a variety of invariant sets, both regular, composed of  periodic or  quasi-periodic orbits, and chaotic zones that evolve in size and structure as the parameter $\eps$ is varied.

\begin{figure}[ht]
\centering
\includegraphics[height=2.048in,
width=3.118in ]{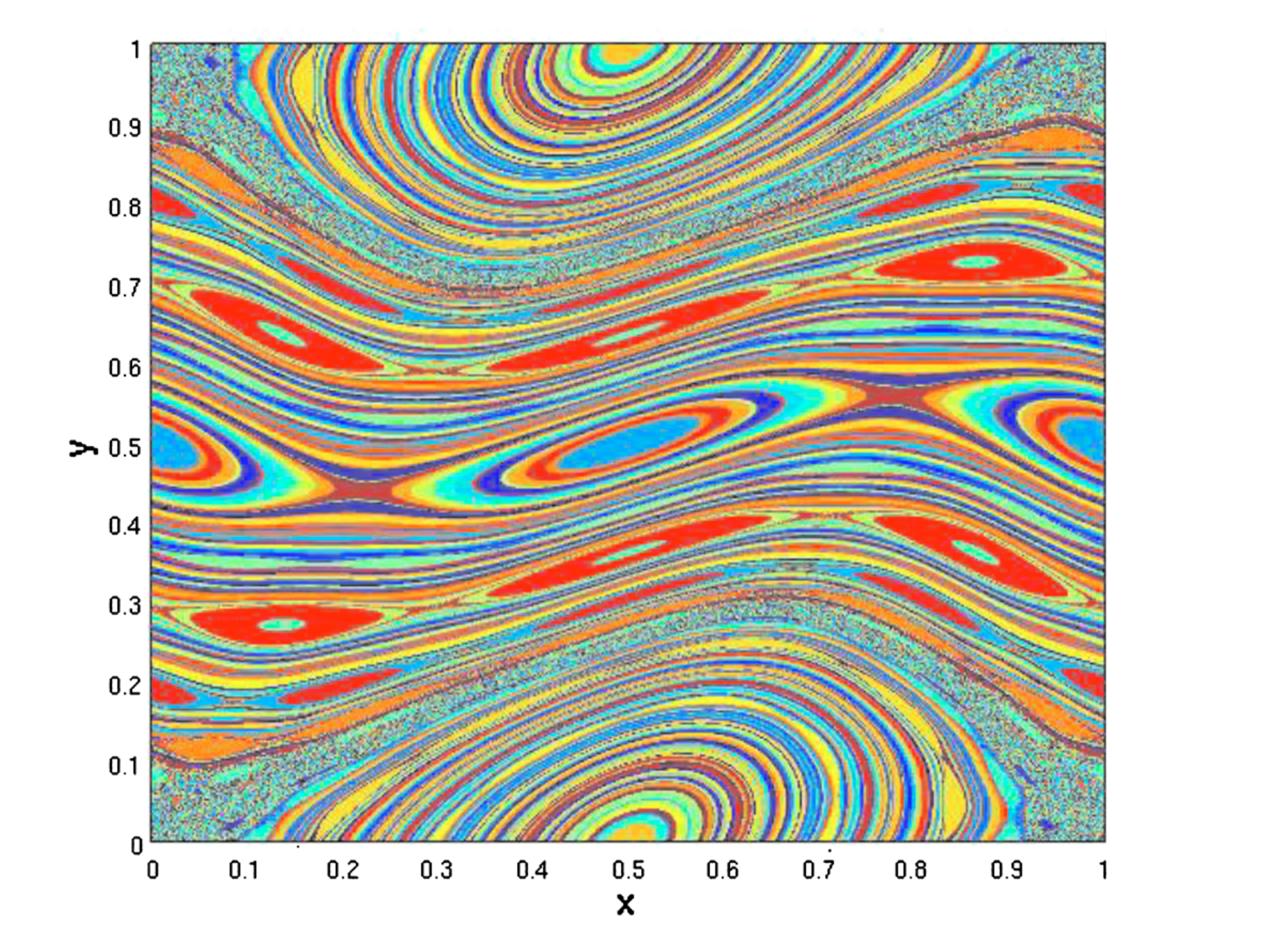} \caption{Two-function approximation of the ergodic partition of the standard map (\ref{SM}). From \cite{levnajic2015ergodic}.}\label{Ergpart}
\end{figure}
The above example indicates that joint level sets of several (or, in the ergodic partition case, countably infinite) eigenfunctions also provide invariant partitions. This concept can be used to compute stable, unstable, and center manifolds \cite{mezic2015applications,Mezic:2019}:
\begin{proposition}Let $T:M\rar M$ have a fixed point at $\m$. Let  $\lambda_1,...\lambda_u$ be positive real part eigenvalues, $\lambda_{u+1},...,\lambda_{u+c}$ be $0$ real part eigenvalues, and $\lambda_{u+c+1},...,\lambda_{s}$ be negative real part eigenvalues of a linear faithful efficient representation $(\x,A)$ with $\x(\m)=0$. Let \be\phi_1,...,\phi_{u+c+s},\ee be the (generalized) eigenfunctions of the associated Koopman operator.
Then the joint level set of (generalized) eigenfunctions
\be L_s=\{\x\in\R^n|\phi_1(\x)=0,...,\phi_{u+c}(\x)=0\},\ee is the stable subspace $E^s$, 
\bea L_c&=&\{\x\in\R^n|\phi_1(\x)=0,...,\phi_{u}(\x)=0,  \nonumber \\ &..., &\phi_{u+c+1}(\x)=0, \nonumber \\
&...,&\phi_{u+c+s}(\x)=0\},\eea is the center subspace $E^c$, and 
\be L_u=\{\x\in\R^n|\phi_{u+c+1}(\x)=0,...,\phi_{u+c+s}(\x)=0\},\ee the unstable subspace $E^u$. In turn,
$\f^{-1}(L_s),\f^{-1}(L_c),\f^{-1}(L_u)$ are the stable subset, the center subset and the unstable subset of $\m=\x^{-1}(0)$, the fixed point of $T$.
\end{proposition}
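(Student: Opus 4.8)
The plan is to push everything through the representation $\x$, reducing the assertion to elementary linear algebra on $\R^{n}$, and then to pull it back to $M$ using faithfulness.

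First I would unpack the hypotheses. Because $(\x,A)$ is a representation, $\x\circ T=A\x$ holds identically on $M$, and $\x(\m)=0$. Faithfulness makes $\x:M\rar\x(M)$ a bijection, and $\x(M)$ is $A$-invariant since $A\x(\m)=\x(T\m)\in\x(M)$; hence $\x$ conjugates $T$ on $M$ to the genuinely linear map $\v\mapsto A\v$ on $\x(M)$ — this is the mechanism behind Proposition~\ref{prop:conj}, and efficiency only serves to identify $n$ with the dimension of the system. Now bring $A$ to Jordan form, $P^{-1}AP=J$, and put $\bm{\phi}=P^{-1}\x$. Exactly as in the derivation of $\dot{\bm{\phi}}=J\bm{\phi}$ (resp. $\bm{\phi}'=J\bm{\phi}$) above, the components of $\bm{\phi}$, organized into Jordan chains, are the (generalized) eigenfunctions $\phi_{1},\dots,\phi_{u+c+s}$ of $U^{t}$ (resp. $U$) at $\lambda_{1},\dots,\lambda_{u+c+s}$, and each is the linear functional $\phi_{j}(\x)=\langle\x,\w_{j}\rangle$ with $\w_{j}$ a row of $P^{-1}$ (a left generalized eigenvector of $A$); this is the promised extension of the Proposition expressing eigenfunctions of $U$ through eigenvectors of $A^{T}$. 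Writing the columns of $P$ as $\v_{1},\dots,\v_{u+c+s}$, the identity $P^{-1}P=I$ yields the biorthogonality $\phi_{j}(\v_{k})=\delta_{jk}$.

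Next I would compute the joint level sets. Arrange the Jordan blocks so that $\v_{1},\dots,\v_{u}$ span the real generalized eigenspace $E^{u}$ of the eigenvalues with positive real part (modulus $>1$ in discrete time), $\v_{u+1},\dots,\v_{u+c}$ span the center eigenspace $E^{c}$ (modulus $1$), and $\v_{u+c+1},\dots,\v_{u+c+s}$ span the stable eigenspace $E^{s}$ (modulus $<1$), so $\R^{n}=E^{u}\oplus E^{c}\oplus E^{s}$ is the spectral decomposition of $A$; conjugate eigenvalue pairs are kept in a common block, so killing one eigenfunction of a pair kills its conjugate and the real dimension count stays consistent. For $\x=\sum_{k}c_{k}\v_{k}$ biorthogonality gives $\phi_{j}(\x)=c_{j}$, so $L_{s}=\{\x\mid\phi_{1}(\x)=\dots=\phi_{u+c}(\x)=0\}=\{\x\mid c_{1}=\dots=c_{u+c}=0\}=\spn\{\v_{u+c+1},\dots,\v_{u+c+s}\}=E^{s}$; the same bookkeeping gives $L_{c}=\{c_{j}=0:j\le u\text{ or }j>u+c\}=E^{c}$, and $L_{u}=E^{u}$ on setting to zero the eigenfunctions of the center and stable eigenvalues (when $c=0$ these are just the $\phi_{u+c+1},\dots,\phi_{u+c+s}$ written in the statement).

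Finally I would transfer back to $M$. Under the conjugacy $\x$ with $\x(\m)=0$, a point $\bp\in M$ has $T^{t}\bp\rar\m$ as $t\rar\infty$ iff $A^{t}\x(\bp)\rar 0$, and for a linear system the set of such points is exactly $E^{s}$ (corresponding to $\Re\lambda<0$, resp. modulus $<1$); intersecting with the invariant set $\x(M)$ and applying $\x^{-1}$ identifies $\x^{-1}(L_{s})$ with the stable subset of $\m$, and likewise — running time backwards, legitimate when $T$ is a diffeomorphism so that $A$ is invertible — $\x^{-1}(L_{u})$ with the unstable subset and $\x^{-1}(L_{c})$ with the center subset. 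The step I expect to be the genuine obstacle is precisely this passage through the conjugacy: it is faithfulness of $(\x,A)$ that makes $\x^{-1}$ single-valued and renders $T$ honestly linear near $\m$, so that the stable/center/unstable \emph{subspaces} of the matrix $A$ can be matched with the stable/center/unstable \emph{subsets} of the fixed point — and one should localize to a neighbourhood of $\m$ when $\x(M)\subsetneq\R^{n}$. The remaining ingredients, the formula $\phi_{j}(\x)=\langle\x,\w_{j}\rangle$ for generalized eigenfunctions and the annihilator computation, are routine.
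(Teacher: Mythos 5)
Your argument is correct, and it is genuinely more self-contained than what the paper does: the paper's entire proof is a one-line deferral to Proposition~3.1 of the cited reference \cite{Mezic:2019}, whereas you reduce the claim to elementary linear algebra. Your chain --- conjugate $T$ to $A$ via the faithful linear representation, pass to Jordan form so that $\bm{\phi}=P^{-1}\x$ gives the (generalized) eigenfunctions as the linear functionals determined by the left generalized eigenvectors, and then use the biorthogonality $\phi_j(\v_k)=\delta_{jk}$ to identify each joint zero level set with the span of the complementary generalized eigenvectors --- is exactly the right mechanism, and it correctly extends the paper's earlier proposition ($\phi_j=\langle\f,\w_j\rangle$ with $\w_j$ an eigenvector of $A^T$) from the diagonalizable to the Jordan-block case. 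Your handling of the real/complex bookkeeping (keeping conjugate eigenvalue pairs in a common block) and of the dynamical characterization of $E^s$ and $E^u$ for a linear system is also sound; note that for the global stable \emph{subset} no localization is actually needed, since under the conjugacy $\{\bp: T^t\bp\rar\m\}$ maps exactly onto $E^s\cap\x(M)$, so $\x^{-1}(L_s)$ is the stable subset outright (the center subset is then by definition the preimage of $E^c\cap\x(M)$, as convergence gives no intrinsic characterization there). What your write-up buys over the paper's citation is a verifiable, elementary proof for the stated (linear-representation) case; what the external reference presumably buys is the machinery needed when the conjugacy to the linear system is only local or nonlinear, i.e.\ when the level sets are genuine stable/unstable \emph{manifolds} rather than subspaces --- a situation the present statement does not require.
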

\begin{proof} The proof follows directly from Proposition 3.1 in \cite{Mezic:2019}.
\end{proof}
In the case when the fixed point is stable, the {\it inertial manifolds} \cite{constantin2012integral} can also be computed as joint zero level sets of a subset of Koopman operator eigenfunctions \cite{nakao2020spectral}.
\subsection*{Eigenfunctions and stability}
Provided we find eigenfunctions that compose a faithful representation, we can use them to examine stability properties of dynamical systems:
\begin{theorem} Let $(\bm{\phi}=(\phi_1,...,\phi_n),\Lambda)$, where $\Lambda$ is a diagonal matrix, be a faithful representation of $T^t$ such that $\lambda_j \in \bC^-,j=1,...,n$, and let $0\in \bm\phi(M)$. Then $\m|\bm{\phi}(\m)=0$ is a globally stable fixed point of $T^t$.
\end{theorem}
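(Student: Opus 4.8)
The plan is to first locate the fixed point, then manufacture a Lyapunov quantity directly from the representation, and finally transport the resulting contraction estimate back to $M$ using faithfulness of $\bm{\phi}$.

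First I would use the defining relation of the linear representation, which for the diagonal $\Lambda$ reads componentwise $\phi_j(T^t\m)=e^{\lambda_j t}\phi_j(\m)$ for every $j$ and every $t\geq 0$. Let $\m^*$ be the point with $\bm{\phi}(\m^*)=0$; it exists since $0\in\bm{\phi}(M)$ and is unique because the representation is faithful. Then $\phi_j(T^t\m^*)=e^{\lambda_j t}\cdot 0=0$ for all $j$, so $\bm{\phi}(T^t\m^*)=0=\bm{\phi}(\m^*)$, and injectivity of $\bm{\phi}$ yields $T^t\m^*=\m^*$ for all $t$; thus $\m^*$ is a fixed point of $T^t$.

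Next I would set $V(\m)=\vecn{\bm{\phi}(\m)}^2=\sum_{j=1}^n|\phi_j(\m)|^2$, so that $V(\m^*)=0$ while $V(\m)>0$ for $\m\neq\m^*$ by faithfulness, and along any trajectory $V(T^t\m)=\sum_{j}e^{2\,\mathrm{Re}(\lambda_j)\,t}\,|\phi_j(\m)|^2$, which is non-increasing in $t$ and strictly decreasing whenever $\bm{\phi}(\m)\neq 0$. Let $\mu=\max_j\mathrm{Re}(\lambda_j)$, which is strictly negative since there are finitely many $\lambda_j$ and each lies in $\bC^-$; then $\vecn{\bm{\phi}(T^t\m)}\leq e^{\mu t}\,\vecn{\bm{\phi}(\m)}$ for all $t\geq 0$. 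Global attraction is immediate: $\vecn{\bm{\phi}(T^t\m)}\to 0$, i.e. $\bm{\phi}(T^t\m)\to\bm{\phi}(\m^*)$. For Lyapunov stability, given a neighborhood $W$ of $\m^*$ I would choose $\eps>0$ with $\{\m:\vecn{\bm{\phi}(\m)}<\eps\}\subseteq W$ and observe that the contraction bound keeps the entire forward orbit of that sublevel set inside itself, hence inside $W$; together these give that $\m^*$ is globally asymptotically stable.

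The one step requiring care --- which I would flag as a standing assumption rather than derive --- is the passage from $\vecn{\bm{\phi}(\m)}$ small to $\m$ close to $\m^*$ in $M$. ``Faithful'' as defined means only that $\bm{\phi}$ is injective, whereas for ``stable fixed point of $T^t$'' to be meaningful one needs $\bm{\phi}$ to be a homeomorphism onto $\bm{\phi}(M)$ --- equivalently $\bm{\phi}^{-1}$ continuous, or the topology on $M$ taken to be the one pulled back along $\bm{\phi}$ --- which is exactly the situation for faithful representations produced by embedding theorems. Granting this, the sets $\{\m:\vecn{\bm{\phi}(\m)}<\eps\}$ form a neighborhood base at $\m^*$ and the argument closes; without it all the estimates still hold verbatim in $\bm{\phi}$-coordinates but stability in $M$ is not well posed. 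The real-versus-complex distinction plays no role, since only $|e^{\lambda_j t}|=e^{\mathrm{Re}(\lambda_j)t}$ enters.
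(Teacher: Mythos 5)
Your proposal is correct, and its core mechanism is the same as the paper's: the diagonal representation gives $\phi_j(T^t\m)=e^{\lambda_j t}\phi_j(\m)$, so $\bm{\phi}(T^t\m)\to 0$ exponentially for every initial condition, and faithfulness is then used to transport this back to the unique point $\m^*$ with $\bm{\phi}(\m^*)=0$. Where you go beyond the paper is in the bookkeeping, and two of your additions are genuinely substantive. First, the paper's proof only argues attraction (it assumes $\lim_{t\rar\infty}T^t\m_0$ exists and derives a contradiction if that limit differs from $\m^*$); it never addresses Lyapunov stability, whereas your sublevel-set argument with $V=\vecn{\bm{\phi}}^2$ supplies that half of "globally stable." Second, you correctly identify the real gap in both arguments: faithfulness as defined is only injectivity, so convergence $\bm{\phi}(T^t\m_0)\rar 0$ in $\bC^n$ does not by itself yield $T^t\m_0\rar\m^*$ in $M$ — indeed the limit in $M$ need not exist at all — unless $\bm{\phi}$ is a homeomorphism onto its image or $M$ carries the pulled-back topology. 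The paper's one-line contradiction silently presupposes exactly this, so your decision to flag it as a standing hypothesis rather than derive it is the right call; it is an assumption the theorem statement should arguably carry. The rest (uniqueness and invariance of $\m^*$, the uniform rate $\mu=\max_j\mathrm{Re}(\lambda_j)<0$, the irrelevance of real versus complex) is all sound.
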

\begin{proof}
Clearly $\lim_{t\rar \infty} \bm{\phi}(\m_0)=0$ for any $\m_0\in M$. But $\bm\phi(M)$ contains $0$, and the representation is faithful. Thus assuming $\lim_{t\rar \infty} T^t \m_0\neq \m$ leads to contradiction.
\end{proof}
An analogous statement holds for discrete time $T$. The condition of faithfulness can be checked near the fixed point in the case of ordinary differential representations
\cite{mauroy2020koopman}.
\section*{Nonlinear Representations}
We have discussed linear representations that are based on finding eigenfunctions of the Koopman operator, and lead to
linear dynamics (reduction of the full Koopman operator) on a subspace of observables. Finite nonlinear representations
also lead to a reduction since the space of observables they operate on is the space of all observables that are constant on joint level sets of $\f$ 
- as in that case knowing $\f$ leads to knowing $\F(\f)$. The following simple lemma holds:
\begin{lemma}
The space $\cal{O}_\f$ of observables that are constant on joint level sets of $\f$ is a linear subspace of $\cal{O}$.
\end{lemma}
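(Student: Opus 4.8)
The plan is a direct verification from the definition of $\cal{O}_\f$. First I would pin down precisely what membership in $\cal{O}_\f$ means: an observable $g\in\cal{O}$ lies in $\cal{O}_\f$ exactly when it is constant on each element of the joint partition $\zeta_\f=\vee_{j=1,\dots,n}\zeta_{f_j}$, equivalently when $\f(\m)=\f(\n)$ implies $g(\m)=g(\n)$ for all $\m,\n\in M$, equivalently when $g$ factors through $\f$ as $g=G\circ\f$ for some (not necessarily continuous) $G:\f(M)\rar\bC$. Fixing this formulation at the outset is the only point that takes any care, since it is what makes the surrounding reduction claim transparent.

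Next I would check the two subspace axioms relative to $\cal{O}$, which was already observed to be a complex vector space. Nonemptiness is immediate: the zero observable (and, more generally, every constant observable) is trivially constant on joint level sets, so $0\in\cal{O}_\f$. For closure under linear combinations, take $g_1,g_2\in\cal{O}_\f$ and $c_1,c_2\in\bC$, and suppose $\f(\m)=\f(\n)$. Then $g_1(\m)=g_1(\n)$ and $g_2(\m)=g_2(\n)$, so evaluating the combination pointwise gives $(c_1g_1+c_2g_2)(\m)=c_1g_1(\m)+c_2g_2(\m)=c_1g_1(\n)+c_2g_2(\n)=(c_1g_1+c_2g_2)(\n)$. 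Hence $c_1g_1+c_2g_2$ is again constant on joint level sets, i.e. $c_1g_1+c_2g_2\in\cal{O}_\f$. Together these show $\cal{O}_\f$ is a linear subspace of $\cal{O}$.

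There is no real obstacle here — the statement is a routine check, and the substance lies only in having stated the membership condition in the ``factors through $\f$'' form, since that is exactly what the subsequent discussion uses: an observable in $\cal{O}_\f$ is determined by the value $\f(\m)$ alone, so the Koopman operator $U$ restricts to $\cal{O}_\f$ and acts there through the finite-dimensional map $\F$. I would also remark in passing that the identical pointwise argument shows $\cal{O}_\f$ is closed under products, hence is in fact a subalgebra of $\cal{O}$ (dovetailing with the algebra-of-eigenfunctions discussion), but for the lemma as stated only the vector-space structure is needed.
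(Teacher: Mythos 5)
Your proof is correct: the paper states this lemma without proof (labeling it ``simple''), and your direct verification --- identifying membership in $\mathcal{O}_{\f}$ with the condition $\f(\m)=\f(\n)\Rightarrow g(\m)=g(\n)$, i.e.\ $g=G\circ\f$, and then checking the zero observable and closure under complex linear combinations pointwise --- is exactly the routine argument the paper implicitly relies on, and your factor-through-$\f$ formulation matches the paper's subsequent definition of the subspace generated by $\f$. The closing remark that $\mathcal{O}_{\f}$ is in fact a subalgebra is also correct, though not needed for the statement.
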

\begin{definition} A subspace of $\cal{O}$ is  said to be {\it generated} by a finite set of functions $\f$ if it is the subspace $\cal{O}_\f$ containing all observables $g(\f)\in \cal{O}$.
\end{definition}

\begin{corollary} A finite-dimensional invariant subspace of $U$ spanned by observables in a linear representation of dimension $n$ is a span of generalized eigenfunctions. Let $(\f,\F)$ be a finite-dimensional, nonlinear representation.
Then the subspace  $\cal{O}_\f$ generated by $\f$, is an  invariant subspace.
\end{corollary}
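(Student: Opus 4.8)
The statement bundles two claims, and the plan is to dispatch them separately. The first is essentially a restatement of the Jordan-form observation already made for linear representations (the passage to $\dot{\bm{\phi}}=J\bm{\phi}$); the second is the genuinely new, if short, point about nonlinear representations, and it reduces to the elementary fact that a function of $\f$ composed with the dynamics is again a function of $\f$. I would state explicitly at the outset that both parts are purely algebraic — no measurability, continuity, or compactness hypotheses are used.

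For the first claim, start from a linear representation $(\f,A)$ of dimension $n$ and set $V=\spn\{f_1,\dots,f_n\}\subset\cO$. Writing $\f=(f_1,\dots,f_n)^{T}$ and letting $U$ act componentwise, the representation relation is the vector identity $U\f=\f\circ T=A\f$; in particular $Uf_j=\sum_k A_{jk}f_k$, so $V$ is $U$-invariant. Pick an invertible $P$ with $P^{-1}AP=J$ in Jordan canonical form and put $\bm{\phi}=P^{-1}\f$. Then $U\bm{\phi}=P^{-1}(U\f)=P^{-1}A\f=(P^{-1}AP)\,\bm{\phi}=J\bm{\phi}$, and reading off the Jordan blocks shows that each component $\phi_i$ satisfies $(U-\lambda I)^{k}\phi_i=0$ for the eigenvalue $\lambda$ and block size $k$ attached to it — i.e. $\phi_i$ is a generalized eigenfunction of $U$ (the relation $(U-\lambda I)^{k}\phi_i=0$ is an equality of observables, so it is immaterial whether $U$ or $U|_V$ is meant). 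Since $P$ is invertible, $\spn\{\phi_1,\dots,\phi_n\}=V$, which is the assertion; specializing to the diagonalizable case recovers the earlier Corollary.

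For the second claim, invoke the preceding Lemma and Definition: an element $g\in\cO_\f$ is exactly one of the form $g=h\circ\f$ for a function $h$ defined on $\f(M)$. Then, using the nonlinear representation relation $\f\circ T=\F(\f)$,
\[
Ug=g\circ T=h\circ\f\circ T=h\circ\big(\F\circ\f\big)=(h\circ\F)\circ\f .
\]
Since $\F:\bC^n\to\bC^n$, the map $h\circ\F$ is again a scalar function of the $n$ values produced by $\f$, so $Ug=(h\circ\F)(\f)\in\cO_\f$. As $\cO_\f$ is a linear subspace by the Lemma and $U$ is linear, this gives $U\cO_\f\subseteq\cO_\f$, i.e. $\cO_\f$ is an invariant subspace of $U$.

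I do not expect a real obstacle here; the only points that call for a little care are the row/column bookkeeping in the first part (whether $U|_V$ is represented by $A$ or $A^{T}$, which does not affect the conclusion since one may work directly with the vector identity $U\f=A\f$), and, in the second part, noting that the factorization $g=h\circ\f$ only determines $h$ on $\f(M)$ — which is harmless because $\f(T\m)=\F(\f(\m))$ again lies in $\f(M)$, so $h\circ\F$ is only ever evaluated there.
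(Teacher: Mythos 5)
Your proof is correct, and it follows exactly the route the paper leaves implicit: the paper states this corollary without proof, but its preceding discussion (the passage from $(\f,A)$ via $P^{-1}AP=J$ to $\dot{\bm\phi}=J\bm\phi$ for the linear part, and the remark that ``knowing $\f$ leads to knowing $\F(\f)$'' for the nonlinear part) is precisely the argument you have written out. Your added care about $h$ being defined only on $\f(M)$ and about $\F(\f(M))\subseteq\f(M)$ is a sound, minor tightening rather than a departure.
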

Thus, the search for finite-dimensional linear representations can be reduced to search for spans of generalized eigenfunctions. The search for nonlinear representations
is the search for invariant subspaces generated by finite sets of observables. It becomes clear that the eigenvalue-eigenfunction problem for the Koopman operator of finding $\phi$ and $\lambda$ such that
\be
U\phi=\phi\circ T=\lambda \phi,
\ee
is just a particular example of finding solutions of the {\it representation  eigenproblem} for a finite set of functions $\f$, and a map $\F$ that satisfy

\be
\boxed{U\f=\f\circ T=\F(\f).}
\label{eq:repeig}
\ee
in the particular case when $\F(\f)=A\f$ where $A$ is an $n\times n$ matrix, $A$ has been called an eigenmatrix \cite{LanandMezic:2013}. In the same vain,
we could call $\F$ an {\it eigenmap}.
\begin{example}
Recently, a proposition for learning nonlinear representations have been described in the SiNDY algorithm \cite{BruntonKutz}, starting with state observables $\f=\x=(x_1,...,x_n)$. $\F$ is expressed as a sum over a set of chosen functions $F_1,...,F_k$,
\be
\F=C F,
\ee
where $F=(F_1,...,F_k)^T$ and C is an $n\times k$ matrix.
The problem of finding $\F$ from data given as $(\x,\x\circ T,...,\x\circ T^m)$, in discrete time then reduces to the problem of finding $C$ such that
\be
X=\Theta C,
\ee
where
\be
X=\begin{bmatrix} 
\x\circ T\\
\vdots \\
\x \circ T^m \\
\end{bmatrix}
\ee
and
\be
\Theta=\begin{bmatrix} 
F_1(\x) & F_2(\x) & \cdots & F_k(\x)\\
F_1(T\x) & F_2(T\x) & \cdots & F_k(T\x)\\
\vdots \\
F_1(T^m\x) & F_2(T^m\x) & \cdots & F_k(T^m\x) \\
\end{bmatrix} 
\ee
The above setting can be formalized by the fact that any $\F$ can be expressed in a basis provided ${\cal O}$ is a separable Hilbert space.
\end{example}
Nonlinear representations can be reduced to linear representations provided a conjugacy exists.
\begin{proposition} Assume $(\f,\F)$ is an $n$-dimensional representation of $T$ and $\h$ is a conjugacy of $(\f,\F)$ to a linear representation $(\g,A)$. Then $\f$ is in a subspace of $\cO$ generated by the set of generalized eigenfunctions $\mathbm{\phi}=(\phi_1,...,\phi_n)$.
\label{prop:generate}
\end{proposition}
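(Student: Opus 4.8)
The plan is to push the problem through the conjugacy onto the linear representation $(\g,A)$ and then apply the Jordan-form reduction of a linear representation into generalized eigenfunctions that was recorded earlier in the text. First I would extract from the hypothesis the single fact that is actually used: a conjugacy between $(\f,\F)$ and the linear representation $(\g,A)$ comes, by the notion of conjugacy of representations (cf.\ Proposition \ref{prop:conj} and the preceding discussion), with the relation $\f=\h(\g)$, after replacing $\h$ by its inverse if the conjugacy is written in the other direction. In particular $\f(\m)$ is a fixed function of $\g(\m)$ for every $\m\in M$; invertibility of $\h$ is not even needed for this direction.

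Then I would decompose the linear part. Since $A$ is $n\times n$, choose an invertible $P$ with $P^{-1}AP=J$ in Jordan canonical form and set $\mathbm{\phi}=P^{-1}\g=(\phi_1,\ldots,\phi_n)$. From $\g\circ T=A\g$ we get $U\mathbm{\phi}=\mathbm{\phi}\circ T=P^{-1}(\g\circ T)=P^{-1}A\g=J\mathbm{\phi}$ (and $U^t\mathbm{\phi}=e^{Jt}\mathbm{\phi}$ in the continuous-time case). Reading this block by block, a component $\phi$ lying in a Jordan block of size $k$ with eigenvalue $\lambda$ satisfies $(U-\lambda I)^{k}\phi=0$ (respectively $(U^t-e^{\lambda t}I)^{k}\phi=0$), so $\phi_1,\ldots,\phi_n$ are exactly $n$ generalized eigenfunctions of the Koopman operator. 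This is precisely the $\dot{\mathbm{\phi}}=J\mathbm{\phi}$ reduction of a linear representation $(\f,A)$ already stated in the text, which I would cite rather than reprove.

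Finally I would recombine: $\g=P\mathbm{\phi}$, hence $\f=\h(\g)=\h(P\mathbm{\phi})$, so each component $f_j=(\h\circ P)_j(\phi_1,\ldots,\phi_n)$ is an observable of the form $g(\mathbm{\phi})$. By the Definition of the subspace $\cO_{\mathbm{\phi}}$ \emph{generated} by a finite set of functions, this says exactly that $\f\in\cO_{\mathbm{\phi}}$, the subspace generated by the $n$ generalized eigenfunctions $\mathbm{\phi}$, which is the assertion.

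I do not expect a genuine obstacle; the remaining work is bookkeeping. The two points that need care are (i) domains --- $\h$ is defined only on $\g(M)$, but $\mathbm{\phi}(M)=P^{-1}\g(M)$, so $\h\circ P$ is well defined exactly where it is used --- and (ii) phrasing ``$\f$ is a function of $\mathbm{\phi}$'' in the precise sense required by the generated-subspace Definition, which remains valid even when $(\g,A)$, and hence $\mathbm{\phi}$, is not faithful, since $\cO_{\mathbm{\phi}}$ is by definition the observables constant on joint level sets of $\mathbm{\phi}$. When $A$ happens to be diagonalizable the generalized eigenfunctions collapse to ordinary eigenfunctions and the statement reduces to the Corollary cited above; the Jordan form is the only extra ingredient.
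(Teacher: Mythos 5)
Your proposal is correct and follows essentially the same route as the paper's proof: pass to the Jordan form of $A$ to obtain generalized eigenfunctions $\mathbm{\phi}$ as an invertible linear image of $\g$, then use the conjugacy to write $\f$ as a function of $\mathbm{\phi}$, hence in the subspace $\cO_{\mathbm{\phi}}$ generated by them. Your write-up is merely more explicit about the blockwise verification that the $\phi_j$ are generalized eigenfunctions and about the domain of $\h$, but the argument is the same.
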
 
\begin{proof}
Since $A$ is linear, there are eigenfunctions $\mathbm{\phi}=C\g$ where $C$ is an $n\times n$ invertible matrix such that
\be
\mathbm{\phi}'=\mathbm{\phi}\circ T=J\mathbm{\phi},
\ee
where $J$ is the Jordan normal form matrix for $A$.
 Since $\h$ is a conjugacy, $\h( \F(\f\circ T))=A\g(\h(\f\circ T))$ and
 $\f=\h^{-1}\g=\h^{-1}C^{-1}\mathbm{\phi}$, proving that $\f$ are in the invariant subspace generated by $\mathbm{\phi}$.
\end{proof}
The above result has a profound consequence on the issue of which systems can be rendered linear, that is in turn related to spectral properties of $U$.
\section*{The Spectral Triple}
So far, we have discussed the eigenvalues and eigenfunctions of the Koopman operator, and their connection to linear representations. Let $U$ act on some Banach space of observables. Then, the eigenvalues 
are part of the spectrum $\sigma(U)$, the complement of the residual set $\rho(U)$ defined as
\be
\rho(U)=\{\lambda\in \bC| (U-\lambda I)^{-1} \text{exists} \}.
\ee
The operator $(U-\lambda I)^{-1}$ is called the resolvent operator. The residual set, and thus the spectrum, are dependent on the functional space on which $U$ operates \cite{Mezic:2019}.
The operator $U$ is called scalar \cite{dundford:1954}  provided
\be
U=\int_{\sigma(U)}\beta dE,
\ee
where $E$ is a family of spectral projections forming resolution of the identity, and spectral provided
\be
U=S+N,
\ee
where $S$ is scalar and $N$ quasi-nilpotent. Examples of functional spaces in which Koopman operators are scalar and spectral are given in \cite{Mezic:2019}. Let $\f\in {\cal O}^n$ be a vector of observables. For the scalar Koopman operator $U$ the Koopman mode of $\f$ associated with an eigenvalue $\lambda$ is given by 
\be
\s_\lambda=\f_\lambda./\phi,
\ee
where $./$ is component-wise division, $\phi$ is the unit norm eigenfunction associated with $\lambda$, and
\be
\f_\lambda=\f-\int_{\sigma(U)/\lambda}\beta dE(\f).
\ee
We assume that the dynamical system $T$ has a Milnor attractor ${\cal A}$ \cite{milnor1985concept} such that for every continuous function $g$,
for almost every $\m:M\rar M$ with respect to an a-priori measure $\nu$ on $M$ (without loss of generality as we can replace $M$ with the basin of attraction of  ${\cal A}$)
the limit
    \be
   g^*(\m)=\lim_{n\rar \infty} \frac{1}{n}\sum_{i=0}^{n-1} U^i g(\m),
   \label{eqn:ta}
    \ee
    exists. This is the case e.g. for smooth systems on subsets of $\R^n$ with Sinai-Bowen-Ruelle measures, where $\nu$ is the Lebesgue measure \cite{hunt1998unique}.  Let $\g(\z,\m)$ be a field of observables indexed by the field variable $\z$ (e.g. $\g(\z,\m)$ could be temperature at spatial position $\z$ when  the system is at $\m\in M$). The {\it spectral expansion}  of the action of $U$ on $\g$ in $\cO=L^2(\mu)$ is given by \cite{Mezic:2005}
\begin{eqnarray*}\label{invmeasexp}
U\g(\z,\m) 
&=&\g^{\ast }(\z,\m)\\
&+&\sum_{j=1}^{k}\exp(i\omega_{j}t) \phi_{j}(\m)\s_j(\z) \\
&+& \int_{0}^{1 }\exp (i2\pi
\alpha t )dE(\alpha)(\g(\z,\m)),
\end{eqnarray*}
where $\g^{\ast }(\z,\m)$ is the time average (\ref{eqn:ta}), $\lambda_j=i\omega_j$ is an eigenvalue with the associated eigenfunction $\phi_j$, and  $\s_j(\z)$ is the $j$-th Koopman mode, i.e. the projection of $\g$ on the eigenspace of the eigenfunction $\phi_j$. The triple $(\lambda_j,\phi_j,\s_j)$ is called the spectral triple. From the previous discussion, any finite set of $\phi_j$'s provides for a (diagonal!) linear representation of $T$.

The term \be\int_{0}^{1 }\exp (i2\pi
\alpha t )dE(\alpha)(\g(\z,\m))\in\cO_{cont}\subset \cO\ee is projection of $\g$ on the continuous part of the spectrum, that is orthogonal to the  \be\cO_{eig}={\sf cl}({\spn\{\phi_j,j=1,...,\infty\}})\ee where ${\sf cl}$ stands for closure. Any finite-dimensional representation of $T$ in $\cO_{cont}$ must be nonlinear. 
\begin{corollary}[To proposition \ref{prop:generate}] A finite-dimensional representation $(\f,\F)$ is not conjugate to a linear representation
provided $\f \not\perp \cO_{cont}$.
\end{corollary}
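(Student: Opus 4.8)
The plan is to prove the contrapositive: if $(\f,\F)$ \emph{is} conjugate to a linear representation, then $\f\perp\cO_{cont}$. So suppose $\h$ is a conjugacy carrying $(\f,\F)$ onto a linear representation $(\g,A)$. By Proposition \ref{prop:generate}, $\f$ lies in the subspace $\cO_{\bm{\phi}}$ generated by the generalized eigenfunctions $\bm{\phi}=(\phi_1,\dots,\phi_n)$ obtained from $A$; concretely, each component satisfies $f_i=g_i(\bm{\phi})$ for some $g_i\colon\bC^n\rar\bC$.

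The heart of the argument is to show this forces $f_i\perp\cO_{cont}$ for every $i$. Each genuine eigenfunction $\phi_j$ lies in $\cO_{eig}=\cl(\spn\{\phi_j\})$, hence is orthogonal to $\cO_{cont}$ by the defining orthogonality $\cO_{eig}\perp\cO_{cont}$; the same holds for generalized eigenfunctions, since the spectral projection of $U$ at an eigenvalue and the continuous-spectrum projection $\int_0^1 dE(\alpha)$ are mutually orthogonal. By the algebra property $(\ref{alg})$, any polynomial in $\phi_1,\dots,\phi_n$ is again a finite linear combination of (generalized) eigenfunctions, and so is orthogonal to $\cO_{cont}$. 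Finally, for continuous $g_i$ the function $g_i(\bm{\phi})$ is a limit of such polynomials, uniform on the compact set $\bm{\phi}(M)$ by Stone--Weierstrass; uniform convergence on a compact set implies convergence in $\cO=L^2(\mu)$, and since $\cO_{cont}$ is closed the limit $f_i$ is orthogonal to $\cO_{cont}$. Hence $\f\perp\cO_{cont}$, and taking the contrapositive: if $\f\not\perp\cO_{cont}$, then no conjugacy of $(\f,\F)$ to a linear representation exists.

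The step I expect to be the main obstacle is the passage from the eigenfunctions to the subspace they \emph{generate}. Two points need care. First, the generalized (Jordan-block) eigenfunctions permitted by Proposition \ref{prop:generate} must still be orthogonal to $\cO_{cont}$ --- this rests on the complementarity of the point-spectrum and continuous-spectrum spectral projections in the scalar/spectral decomposition used in this section --- and the full generalized eigenspace must be closed under products so that polynomials in $\bm{\phi}$ remain inside it. Second, ``generated by'' admits \emph{nonlinear} functions $g_i(\bm{\phi})$, not merely linear combinations, so one needs the algebra property $(\ref{alg})$ together with a density argument; this is exactly where regularity hypotheses on $g_i$ and compactness of $\bm{\phi}(M)$ enter, and where the choice of observable space $\cO$ --- here $L^2(\mu)$, on which the relation $\perp$ is defined --- matters.
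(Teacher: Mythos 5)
Your proof is correct in outline and reaches the statement by the same contrapositive route as the paper, but the two arguments diverge in how they discharge the key step. The paper's own proof is a one-liner: it asserts that existence of the conjugacy forces $U$ (on the relevant subspace) to have point spectrum, citing \cite{Mezic:2019}, and concludes $\f\perp\cO_{cont}$ immediately from the orthogonality of $\cO_{eig}$ and $\cO_{cont}$. You instead make that citation's content explicit: Proposition \ref{prop:generate} puts $\f$ in the subspace generated by $\bm{\phi}$, the algebra property (\ref{alg}) puts polynomials in the eigenfunctions inside $\cO_{eig}$, and a density argument transports orthogonality to general $g_i(\bm{\phi})$. What your version buys is a self-contained proof; what it costs is a list of hypotheses the paper never states. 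Three of these deserve flagging. First, complex Stone--Weierstrass requires a self-adjoint algebra, and polynomials in $\phi_1,\dots,\phi_n$ alone are not conjugation-closed; you need to adjoin the conjugates $\phi_j^c$, which is harmless because $U\phi_j^c=(\lambda_j\phi_j)^c=\lambda_j^c\phi_j^c$, so each $\phi_j^c$ is again an eigenfunction and the enlarged algebra still sits in $\cO_{eig}$ --- but without this remark the density step as written is false (polynomials in $z$ alone are not dense in $C$ of a disk). Second, you need $\bm{\phi}(M)$ compact, $\mu$ finite, and the $g_i$ continuous; the paper's conjugacy $\h$ is only required to be a bijection in Proposition \ref{prop:conj}, so continuity of $g_i=(\h^{-1}C^{-1})$ is an added regularity assumption, and for merely measurable $g_i$ you would need $L^2$-density of polynomials rather than uniform density. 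Third, closure of the generalized eigenspace under products does hold (a Leibniz-type computation gives $(U-\lambda\beta I)^{k+l-1}(\phi\psi)=0$), but you only assert it. None of these is fatal --- each is patchable in the settings the paper works in --- so I would call your proof a correct, more explicit variant of the paper's argument rather than one containing a genuine gap, with the Stone--Weierstrass self-adjointness point being the one item you must actually add before the density step goes through.
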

\begin{proof} Assume the conjugacy to a linear representation exists. Then $U$ has point spectrum \cite{Mezic:2019}
and therefore $\f \perp \cO_{cont}$.
\end{proof}
\begin{example}
Consider the following coupling of the limit cycling system in example (\ref{exa:lc}) and the Lorenz system in example (\ref{exa:Lor}):
\bea
\dot r&=& (1+f(x,y,z))r(1-r^2)\nonumber \\
\dot \theta&=&\omega \nonumber \\
\dot x&=&\sigma (y-x),\\
\dot y&=&x(\rho -z)-y,\nonumber\\
\dot z&=&xy-\beta z. \nonumber\\
\label{lcLorenz}
\eea
where $f$ is some positive bounded real function $0<f<c<\infty$. The representation of this system is part linear and part nonlinear, where the complex function $\psi=e^{i\theta}$ constitutes the linear part 
\be
\dot \psi=i\omega \psi.
\ee
\end{example}
Similar results are available for spectral expansions of a large class of systems with Milnor attractors, see \cite{Mezic:2019}.

\subsection*{Types of Spectra}
There are two elements that determine the spectrum of a given dynamical system: the function space and the type of the attractor determined by its dynamics \cite{Mezic:2019}. Interestingly, these are commingled: a linear dynamical system in a complex plane can have a fixed point, where on a subset of linear observables  the spectrum is discrete, but in $L^2(\nu),$ where $\nu$ is Lebesgue, will have a very large spectrum, for example filling the entire unit disk of the complex plane \cite{ridge:1973}. The ``on-atrractor" space can always be chosen to be $L^2(\mu)$. But the transient dynamics requires spaces {\it adapted} to the dynamics, as described in \cite{Mezic:2019}. In such spaces, the on-attractor spectrum and the off-attractor spectrum combine by multiplication to provide the full spectrum of teh Koopman operator:
For a scalar Koopman operator of a dynamical system with a Milnor attractor with $\mu$ being a Borel measure, define the tensor product space \begin{equation}\cH=\cH_\caA\otimes \cH_\cB.
\end{equation} where $\cH_\caA=L^2(\mu)$ and Define $\cH_\cB=\tilde \cH_\cB\cup {\mathbf 1}$, where ${\mathbf 1}$ is the constant unit function on $\cD$ and $\tilde\cH_\cB\subset C(\cD)$  a Hilbert space of  functions $f:\cD\rightarrow \mathbb{C}$ that vanish on the attractor $\caA$.  Clearly,
$U^t=U^t|_{\cH_\caA}\otimes U^t|_{\cH_\cB}$ on $\cH$. Define  $P(a,b)=a\cdot b, \ a,b\in \bC$ to be the scalar product of $a$ and $b$, and 
\begin{equation}
P(A,B)=\cup_{a\in A,b\in B}P(a,b), \ \ A,B\subset \bC.
\end{equation}
We have the following \cite{Mezic:2019}:
\begin{theorem} \label{dissHilb}Consider the composition operator $U^t:\cH\rightarrow \cH$, and let $\sigma(U^t|_{\cH_\caA}),\sigma(\koop|_{\cH_\cB})$ be the spectra of its restrictions to $\cH_\caA$ and $\cH_\cB$ with the associated projection-valued spectral measures $P_\omega, \ \omega\in S^1$, and $P_z,\ z\in \mathbb{C}$. Then $\sigma(U^t)=\cl({P(\sigma(U^t|_{\cH_\caA}),\sigma(U^t|_{\cH_\cB})))}$ and 
\label{the:product}
\begin{equation}
U^t=\int_{\mathbb{C}} \int_{\mathbb{R}} e^{z t} e^{i2\pi\omega t} dP_{\omega}dP_{z}.
\label{spectral}
\end{equation}
\end{theorem}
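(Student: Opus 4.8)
The plan is to prove the identity by representing each tensor factor of $U^t$ through its own projection-valued spectral measure, tensoring the two resolutions of the identity to produce the single spectral integral (\ref{spectral}), and then reading off $\sigma(U^t)$ as the closed essential range of the symbol $(\omega,z)\mapsto e^{i2\pi\omega t}e^{zt}$ with respect to the product measure. Throughout I would work with the bounded exponentiated operators $U^t|_{\cH_\caA}$ and $U^t|_{\cH_\cB}$ rather than with their generators, which will matter for the estimates below.

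First, on the on-attractor factor: since $\mu$ is $T^t$-invariant, $U^t|_{\cH_\caA}$ is a strongly continuous one-parameter unitary group on $\cH_\caA=L^2(\mu)$, so by Stone's theorem it carries a projection-valued measure $P_\omega$, $\omega\in\R$, with $U^t|_{\cH_\caA}=\int_\R e^{i2\pi\omega t}\,dP_\omega$ and $\sigma(U^t|_{\cH_\caA})$ the closed $P_\omega$-essential range of $\omega\mapsto e^{i2\pi\omega t}$; in particular $\sigma(U^t|_{\cH_\caA})\subset S^1$ is compact. For the transient factor I would invoke the construction of the adapted space from \cite{Mezic:2019}, where $\tilde\cH_\cB\subset C(\cD)$ is chosen precisely so that $U^t|_{\cH_\cB}$ is scalar, hence normal with a projection-valued measure $P_z$, $z\in\bC$, satisfying $U^t|_{\cH_\cB}=\int_\bC e^{zt}\,dP_z$ and $\sigma(U^t|_{\cH_\cB})$ the closed $P_z$-essential range of $z\mapsto e^{zt}$; since the functions in $\tilde\cH_\cB$ vanish on $\caA$ this spectrum is the decaying part, lying in the closed unit disk, while the constant $\mathbf 1$ contributes the eigenvalue $1$.

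Next I would form the product measure $E=P_\omega\otimes P_z$ on $\R\times\bC$, determined by $E(F\times G)=P_\omega(F)\otimes P_z(G)$, whose support is $\operatorname{supp}P_\omega\times\operatorname{supp}P_z$. Using the factorization $U^t=U^t|_{\cH_\caA}\otimes U^t|_{\cH_\cB}$ already recorded above and the multiplicativity of the joint functional calculus on product symbols, this yields
\[
U^t=\Big(\int_\R e^{i2\pi\omega t}\,dP_\omega\Big)\otimes\Big(\int_\bC e^{zt}\,dP_z\Big)=\int_{\bC}\int_{\R}e^{zt}e^{i2\pi\omega t}\,dP_\omega\,dP_z,
\]
which is (\ref{spectral}) (the interchange of the two spectral integrals is legitimate because the two spectral measures commute). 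For the spectrum I would then use the standard fact that a normal operator $N=\int h\,dE$ with $h$ continuous has $\sigma(N)=\cl(h(\operatorname{supp}E))$; applied with $h(\omega,z)=e^{i2\pi\omega t}e^{zt}$ and $\operatorname{supp}E=\operatorname{supp}P_\omega\times\operatorname{supp}P_z$, and noting that $h(\operatorname{supp}P_\omega\times\operatorname{supp}P_z)$ has closure $P\big(\sigma(U^t|_{\cH_\caA}),\sigma(U^t|_{\cH_\cB})\big)$ (a compact set, since both factor spectra are compact), this gives $\sigma(U^t)=\cl\big(P(\sigma(U^t|_{\cH_\caA}),\sigma(U^t|_{\cH_\cB}))\big)$.

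The step I expect to be the main obstacle is justifying the spectral-mapping identity $\sigma(A\otimes B)=\cl(\sigma(A)\cdot\sigma(B))$ rigorously for the operators at hand. The inclusion ``$\subseteq$'', together with closedness, is soft: if $\zeta\notin\cl(\sigma(A)\cdot\sigma(B))$ then $(a,b)\mapsto(ab-\zeta)^{-1}$ is bounded and continuous on $\sigma(A)\times\sigma(B)$, so the joint functional calculus turns it into a bounded two-sided inverse of $A\otimes B-\zeta I$. The reverse inclusion is the real content: given $a\in\sigma(A)$, $b\in\sigma(B)$, one picks Weyl (approximate-eigenvector) sequences with $\|u_n\|=\|v_n\|=1$, $\|(A-a)u_n\|\to0$, $\|(B-b)v_n\|\to0$, and checks that $u_n\otimes v_n$ is a Weyl sequence for $A\otimes B$ at $ab$ via $\|(A\otimes B-ab\,I)(u_n\otimes v_n)\|\le\|(A-a)u_n\|\,\|Bv_n\|+|a|\,\|(B-b)v_n\|\to0$; this is exactly why it is important to work with the bounded $U^t|_{\cH_\caA}$ (unitary) and $U^t|_{\cH_\cB}$ (a contraction), so that $\|Bv_n\|$ stays bounded. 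Two further points deserve care: the hypothesis that $U^t|_{\cH_\cB}$ is genuinely scalar must be imported from \cite{Mezic:2019}, as the spectral integral in the last step is otherwise unavailable; and the degenerate value $\zeta=0$ is harmless, since $0\cdot\sigma(U^t|_{\cH_\caA})=\{0\}$ already lies inside the closure on the right.
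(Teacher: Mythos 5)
The paper does not actually prove this theorem: it is stated as an import from \cite{Mezic:2019} (the sentence immediately preceding it reads ``We have the following \cite{Mezic:2019}''), so there is no in-text argument to compare yours against. Your proposal supplies a correct, self-contained proof along the standard lines: represent each tensor factor by its spectral measure, form the product measure to get the double spectral integral, and establish $\sigma(A\otimes B)=\cl(\sigma(A)\cdot\sigma(B))$ with the easy inclusion by joint functional calculus and the hard inclusion by Weyl sequences (this is essentially the Brown--Pearcy theorem on spectra of tensor products, specialized to the normal case). You correctly identify the two load-bearing external inputs: the factorization $U^t=U^t|_{\cH_\caA}\otimes U^t|_{\cH_\cB}$, which the paper records just before the theorem, and the scalarity of $U^t|_{\cH_\cB}$ on the adapted space, which must come from \cite{Mezic:2019}. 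One refinement worth noting: ``scalar'' in the Dunford sense used earlier in the paper yields a resolution of the identity by projections that need not be orthogonal, so $U^t|_{\cH_\cB}$ is a priori only similar to a normal operator (Wermer--Mackey on Hilbert space) rather than normal; since both the spectrum and the approximate point spectrum are similarity invariants, your Weyl-sequence step (which needs $\sigma=\sigma_{\mathrm{ap}}$) survives, but the remark should be made explicit. A small cosmetic point: the indexing $\omega\in S^1$ in the theorem statement versus the integral over $\mathbb{R}$ is an inconsistency of the paper, not of your argument, and your use of Stone's theorem with $\omega\in\mathbb{R}$ is the right reading.
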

\begin{example}
For a continuous-time, globally stable limit cycle system in $\bR^{n+1}$ with limit cycle frequency $\omega$, the on-attractor spectrum is $in\omega, n\in \bZ$. The off-attractor spectrum in Modulated Fock Space \cite{Mezic:2020} consists of positive integer combinations $\lambda_{\m,k}=\m\cdot\bm{\beta}, \m\in \bN^n$ of Floquet stability exponents $\bm{\beta}=(\beta_1,...,\beta_n)$. Thus, the full spectrum in $\cH$ is given by $in\omega+\m\cdot\bm{\beta}, n\in \bZ, \m\in \bN^n$.

Consider the three-dimensional, limit cycling system
\begin{eqnarray}
\dot x&=&y  \label{eq:1}\\
\dot y&=&x-x^3-cy  \label{eq:2}\\
\dot \theta&=&\omega. \label{eq:3}
\end{eqnarray}
The two fixed points of the equations (\ref{eq:1}-\ref{eq:2}) are $y=0,x=\pm1$. The linearization matrix at those is 
\be
A=\left[ \begin{array}{cc} 0 & 1 \\ -2 & -c \end{array}\right],
\ee
and thus the eigenvalues are determined by
\be
(-\lambda)(-c-\lambda)+2=\lambda^2+c\lambda+2=0,
\ee
leading to
\be
\lambda_{1,2}=\frac{-c\pm\sqrt{c^2-8}}{2}
\ee
For $c=\sqrt{7},\omega=1$, the eigenvalues  read $\lambda_{3,4}=-1.3228756\pm .5i$. Setting $\omega=1$, the other two principal eigenvalues are $\pm i$.
In figure \ref{EvaLat} (from \cite{Mezic:2019}) we show a subset of the eigenvalues of the Koopman operator on $L^2(S^1)\times {\cal A}$, where ${\cal A}$ is the space of analytic functions on the plane, in the basin of attraction of either of the limit cycles (since they are symmetric) of (\ref{eq:1}-\ref{eq:3}).
\begin{figure}[h!!] 
\centering \hspace{.5cm} \includegraphics[width=3in,
height=3in, clip=true, trim=0 100 0 100]{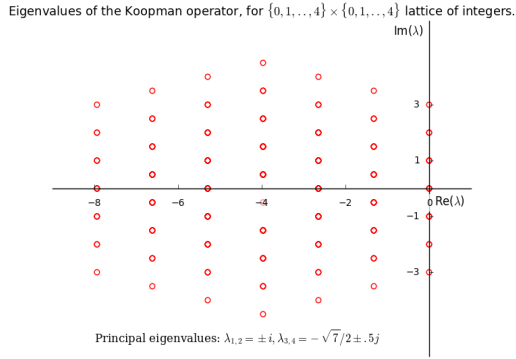}
\caption{ Eigenvalues of the Koopman operator, for $\{0,1,..,4\}\times \{0,1,..,4\}$ lattice of integers. From \cite{Mezic:2019}.}
\label{EvaLat}
\end{figure}
While the computations of point spectrum were already available in \cite{MezicandBanaszuk:2004} using GLA, and \cite{Rowleyetal:2009} using DMD, computations of continuous spectrum are more recent \cite{Kordaetal:2020,govindarajanetal:2019,giannakis2020delay}. They have been used to identify coherent pseudo-eigenfunctions in the (mixing!) Lorenz system. The contour plot of such eigenfunction is shown in figure \ref{Lor} from \cite{Kordaetal:2020}.
\begin{figure}[h!!] 
\centering \hspace{.5cm} \includegraphics[width=3in,
height=2in, clip=true, trim=0 100 0 100]{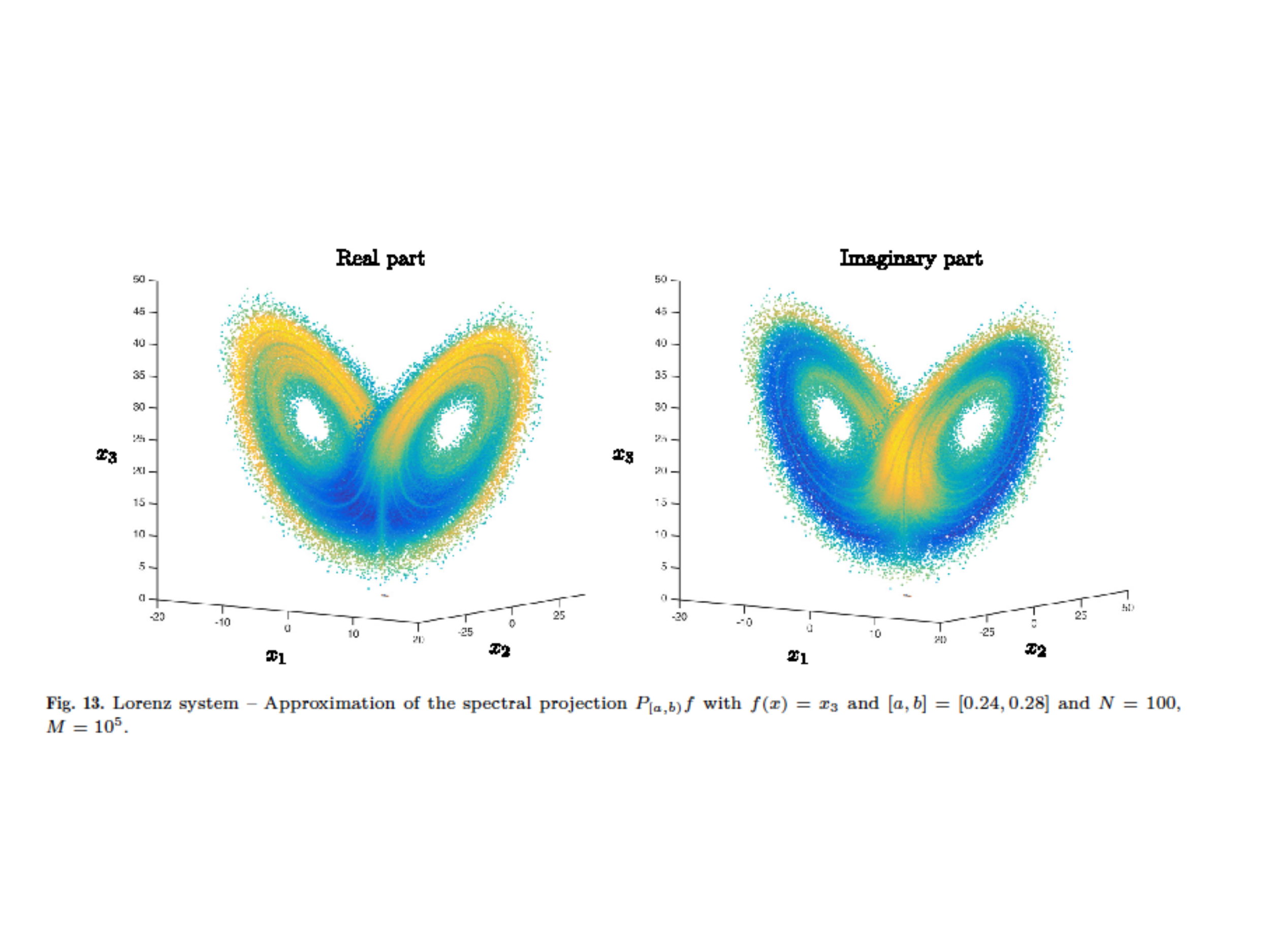}
\caption{Lorenz system: Approximation of the spectral projection $P[a,b)f$ with $f(x) =x_3$ and $[a, b] =[0.24, 0.28]$. The contours of the real and imaginary parts of the resulting pseudo-eigenfunction are indicated by color, showing substantial regularity. From \cite{Kordaetal:2020}. }
\label{Lor}
\end{figure}
Such observables have much longer prediction horizons than a typical observable on the Lorenz system.
\end{example}
\section*{Learning Dynamical Systems from Data}
Historically, the dynamical equations of motion, such as Newton's, Einstein's and Schr\"odinger's were obtained using depth of human intuition guided by small amounts of, or no data (Einstein's case). 
Classical automatized approaches to learning dynamical systems from data arose in control theory \cite{ljung1994modeling}. The goal was to connect system inputs $\u$ to system outputs $\y$ via analysis of a structured model connecting these. The most commonly used structure of the model is linear
\bea
\dot \x&=&A\x+B\u \nonumber \\
\y&=&C\x
\eea
where $\x$ is the state of the system, and $\y$ a linear  vector of observables.

 In
{\it static} machine learning problems  there are also ``inputs" and ``outputs", in the simplest case $\x\in \bR^n$  and $\y\in \bR^m$, although both input and output spaces can be more complicated, say manifolds. The key objective is to connect inputs and outputs by a map $f:\bR^n\rar \bR^m$ learned from a measured subset of input-output pairs 
\[
(\x_j,\y_j),j=1,...,N.
\]
Let $Im(f)$ be the image of the map $f$, and $Dom(f)$ its domain.
Provided $Im(f)\subset Dom(f)$, $f$ could be considered a dynamical system, since $f^k(\x)$ is well defined for any $\x\in Dom(f)$. In this case the data pairs $(\x_j,\y_j)$ can be obtained as successive points along the trajectory of $f$:
\be
\y_j=f(\x_j)= f^j (\x)
\ee
The learning problem, in both the cases of  static and dynamical systems $f$ is the same: given the data pairs, approximate $f$ for any input point. The dynamics does provide an advantage though, as data can be sampled along a trajectory advancing in time. Assume a discrete time dynamical system $T$ has an $n$-dimensional linear representation $(\f,A)$, such that
\be
\f'=A\f.
\ee
If we take samples of $\f$ along a trajectory $\f(\m),\f(T\m),...,\f(T^m\m)$, obtaining a sequence of {\it snapshots}
\be
\f_1=\f(\m),\f(T\m)=\f_2,...,\f(T^m\m)=\f_m,
\ee
we have
\be
\f(T^k)=A\f(T^{k-1}), k=1,....,m.
\ee
Thus, forming  data matrices
\be
X=
\begin{bmatrix} 
f_1(\m) & f_1 (T\m)& \cdots & f_1(T^{m-1}\m)\\
f_2(\m) & f_2 (T\m)& \cdots & f_2(T^{m-1}\m)\\
\vdots \\
f_n(\m) & f_n (T\m)& \cdots & f_n(T^{m-1}\m \\
\end{bmatrix} 
\ee
and
\be
X'=
\begin{bmatrix} 
f_1(T\m) & f_1 (T^2\m)& \cdots & f_1(T^m\m)\\
f_2(T\m) & f_2 (T^2\m)& \cdots & f_2(T^m\m)\\
\vdots \\
f_n(T\m) & f_n (T^2\m)& \cdots & f_n(T^m\m) \\
\end{bmatrix} 
\ee
Note that each row $j$ of data matrices $X,X'$ is an evaluations of the function $f_j$ on the trajectory of $T$ 
starting at $M$. Setting 
\be
X'=XC,
\ee
we see that $C$ is the companion matrix
\be
\tilde U=C=\left(
\begin{matrix}
0 &  0  & \ldots & 0 &c_{1}\\
1 &  0 & \ldots & 0& c_{2}\\
0 &  1 & \ldots & 0& c_{3}\\
\vdots & \vdots & \ddots &  \vdots & \vdots\\
0  &   0       &\ldots & 1 & c_{m} \\
 \end{matrix}\right)
 \label{eq:comp}
\ee
where $\c=(c_1,...,c_m)=\c_{m}$. 
The solution of this equation, provided $m=n$, and $X$ is non-singular, is
\be
C=X^{-1}X'.
\label{eqn:linsub}
\ee
The matrix $C$ would then be thought of as an approximation to the Koopman operator acting on the space $\cO_m=\bR^m$ of functions on the set of points $Tr=(\m,T\m,...,T^m\m)$ \cite{Mezic:2020}.
There are several caveats here. Typically we do not know in advance that a linear representation exists. Thus, we need to specify the dimension, $n$ and choose $f_1,...,f_n$. In the case when $\f$ do not span an $n$-dimensional invariant subspace of $U$, we can as well identify $C$ from (\ref{eqn:linsub}), but we can have 
\be
f_j(T^{m})\neq \sum_{k=1}^{m-1}c_k f_j(T^k\m).
\ee
In other words, it might be that the $m$-th element of the Krylov sequence $f_j(\m), f_j(T\m),...$ does not belong to the same subspace as the first $m-1$ elements. In addition, all the functions $f_k,k=1,...,n$ would have the same relationship between the last element of their own Krylov sequence and the prior elements. If the trajectory $Tr$ is periodic with period $m$, then $\c_m=(1,0,...,0)$ provides an exact reduction of the Koopman operator to $\cO_m$. Even if the trajectory of $T$ is dense on a subset, the approximation has good properties \cite{Mezic:2020}. In principle, for such trajectories the number of {\it snapshots} $\f_1,...,\f_m$ can be smaller and than the number of functions in each snapshot, and
\be
C=X^+X,
\ee
where 
\be
X^+=(X^\dagger X) X^\dagger
\ee
is the Moore-Penrose pseudoinverse of $X$, and $X^\dagger$ is the hermitian transpose (we allow for complex observables) of $X$.
 This was the reason behind the initial success utilizing the methodology of Dynamic Mode Decomposition \cite{Schmid:2010} to complex fluid flows in \cite{Rowleyetal:2009}, as the Koopman modes uncovered there were the consequence of the quasi-periodic nature of a portion of the underlying attractor \cite{Mezic:2005}.

A more structured approach to finding linear representations is that of finite-section methods, the first version of which was Extended Dynamic Mode Decomposition (EDMD) \cite{williamsetal:2015}. Consider the Koopman operator acting on an observable space ${\cO}$ of functions on the state space $M$, equipped with the complex inner product $\left<\cdot,\cdot\right>$,\footnote{Note hat we are using the complex inner product linear in the first argument here. The physics literature typically employs the so-called Dirac notation, where the inner product is linear in its second argument.} and let
$\f=\{f_j\},\ j\in \bN$ be an orthonormal basis on ${\cO}$, such that, for any function $f\in {\cO}$ we have
\be
f=\sum_{j\in \bN}c_jf_j.
\ee
Let
\be
u_{kj}=\left<Uf_j,f_k\right>.
\label{eq:inn}
\ee
Then,
\be
(Uf)_k=\left<Uf,f_k\right>=\sum_{j\in \bN}c_j\left<Uf_j,f_k\right>=\sum_{j\in \bN} u_{kj} c_j.
\ee

The basis functions do not necessarily need to be orthogonal. Consider the action of $U$ on an individual, basis function $f_j$:
\be
Uf_j=\sum_{k\in \bN}u_{kj} f_k,
\ee
where $u_{kj}$ are now just coefficients of $Uf_j$ in the basis.
We obtain
\be
Uf=\sum_{j\in \bN}c_jUf_j=\sum_{j\in \bN}c_j\sum_{k\in \bN}u_{kj} f_k=\sum_{k\in \bN}\left(\sum_{j\in \bN}u_{kj}c_j\right) f_k,
\ee
and we again have 
\be
(Uf)_k=\sum_{j\in \bN} u_{kj}c_j.
\ee
As in the previous section, associated with any linear subspace $\mathcal G$ of $\cO$, there is a projection onto
it, denoted $P = P^2$, that we can think 
of as projection ``along" the space $(I-P){\cO}$, since, for any $f\in {\cO}$ we have
\be P(I-P)f=(P-P^2)f=0,
\ee
and thus any element of $(I-P){\cO}$ has projection $0$. 
We denote by $\tilde U$ the infinite-dimensional matrix with elements $u_{kj},\ k,j \in \bN$. Thus, the finite-dimensional section of the matrix
\be
\tilde U_{n}=\begin{bmatrix}
		 u_{11} &  u_{12} & \cdots &   &  u_{1n}\\
		  u_{21}&  u_{22} &        &   &  u_{2n}\\
		&  &        &  & \\
		\vdots & & \ddots & &\vdots\\
		 u_{n1} & u_{n2} & \cdots & & u_{nn}
	\end{bmatrix},
	\label{comp}
\ee
is the so-called compression of $\tilde U$ that satisfies
\be
\tilde U_n=P_n\tilde U P_n,
\ee
where $P_n$ is the projection ``along" $
(I-P_n){\cO}$ to the span of the first $n$ basis functions, $\spn({f_1,...,f_n})$. To apply the finite-section methodology of approximation of the Koopman operator, we need to estimate coefficients $u_{kj}$ from data.

In the case of non-orthonormal basis, denote by $\hat f_k$ the dual basis vectors, such that
\be
\left<f_j,\hat f_k\right>=\delta_{jk},
\ee
where $\delta_{jj}=1$ for any $j$, and $ \delta_{jk}=0$ if $j\neq k$. 
For the infinite-dimensional Koopman matrix coefficients we get 
\be
u_{kj}=\left<Uf_j, \hat f_k\right>.
\ee
Let's consider the {\it finite} set of independent functions $\tilde \f=\{f_1,..,f_N\}$ and the associated dual set $\{\hat g_1,..,\hat g_N\}$ in the span  $\tilde {\cO}$ of $\tilde \f$, that satisfy
\be
\left<f_j,\hat g_k\right>=\delta_{jk}.
\ee
 Under ergodicity condition, in the case $\cO=L^2(\mu)$ it is possible to obtain $\tilde u_{kj}$ from data \cite{Mezic:2020}: 
\begin{ther}
\label{thernonor}
Let $\{f_1,..,f_N\}$ be a set of functions in $L^2(M,\mu)$ and let $T$ be ergodic on $M$ with respect to an invariant measure $\mu$. Let $\x_l,l\in \bN$ be a trajectory on $M$. Then, for almost any $\x_1\in M$
\bea
\label{eq:finsecentries}
\tilde u_{kj}&=&\lim_{m\rar\infty}\frac{1}{m}\sum_{l=1}^m f_j \circ T(\x_l) \hat g_k^c(\x_l)\nonumber \\
&=&\lim_{m\rar\infty}\frac{1}{m}\sum_{l=1}^m  f_j (\x_{l+1}) \hat g_k^c(\x_l),
\eea
where, for any finite $m$, $\hat g_k^c(\x_l),l=1,...,m$ are obtained as rows of the matrix $(F^{\dagger}F)^{-1}F^{\dagger}$, where
\be
F=\left[f_1(\X) \ f_2(\X)\ ... \ f_N(\X)\right],
\ee
$F^{\dagger}=(F^c)^{T}$ is the conjugate (Hermitian) transpose \index{Transpose! conjugate} \index{Conjugate transpose} \index{Hermitian transpose} of $F$
, and $f_j(\X)$ is the column vector $(f_j(\x_1) \ ... \ f_j(\x_m))^T$.
 \label{ther:finsecdata1}
\end{ther}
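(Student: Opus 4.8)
The plan is to read the stated identity as an instance of the Birkhoff pointwise ergodic theorem \cite{Petersen:1983}, with the matrix algebra serving only to name the limit. First I would make the exact entry $\tilde u_{kj}=\langle Uf_j,\hat g_k\rangle$ explicit. Let $\Gamma$ be the $N\times N$ Gram matrix $\Gamma_{ij}=\langle f_i,f_j\rangle$; since $\{f_1,\dots,f_N\}$ is independent in $L^2(\mu)$, $\Gamma$ is invertible (Hermitian positive definite), and the dual vectors are $\hat g_k=\sum_i(\Gamma^{-1})_{ki}f_i$. Substituting this into $\langle Uf_j,\hat g_k\rangle$ and introducing the one-step correlation matrix
\be
a_{ji}:=\langle f_j\circ T,\,f_i\rangle=\int_M (f_j\circ T)\,\overline{f_i}\,d\mu ,
\ee
one gets the closed form $\tilde u_{kj}=\sum_{i=1}^N\overline{(\Gamma^{-1})_{ki}}\,a_{ji}$, a fixed bilinear contraction of the inverse Gram matrix against the correlation matrix. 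This is the target of the limit.

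Next I would recognize the right-hand side of (\ref{eq:finsecentries}) as that same contraction, evaluated at empirical, trajectory-based surrogates of $\Gamma$ and $a$. Indeed $\tfrac1m F^\dagger F$ has entries $\tfrac1m\sum_{l=1}^m \overline{f_i(\x_l)}f_j(\x_l)$, the Gram matrix with respect to the empirical inner product $\langle u,v\rangle_m=\tfrac1m\sum_{l=1}^m u(\x_l)\,\overline{v(\x_l)}$; the numbers $\hat g_k^c(\x_l)$ read off from $(F^\dagger F)^{-1}F^\dagger$ are, once the factor of $m$ buried in the pseudoinverse is matched against the $\tfrac1m$ prefactor of (\ref{eq:finsecentries}), exactly $\sum_i[(\tfrac1m F^\dagger F)^{-1}]_{ki}\,\overline{f_i(\x_l)}$, i.e.\ the dual basis computed with $\langle\cdot,\cdot\rangle_m$ instead of $\langle\cdot,\cdot\rangle$; and $\tfrac1m\sum_l f_j(\x_{l+1})\overline{f_i(\x_l)}=\tfrac1m\sum_l (f_j\circ T)(\x_l)\,\overline{f_i(\x_l)}$ is the empirical surrogate of $a_{ji}$. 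Thus the right-hand side of (\ref{eq:finsecentries}) is the contraction of the empirical inverse Gram matrix against the empirical correlation matrix.

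Then I would pass to the limit using ergodicity. Since $\mu$ is $T$-invariant, composition with $T$ is an isometry of $L^2(\mu)$, so $f_j\circ T\in L^2(\mu)$ and, by Cauchy--Schwarz, each product $\overline{f_i}f_j$ and $(f_j\circ T)\overline{f_i}$ lies in $L^1(\mu)$. Applying Birkhoff's theorem to the ergodic $T$ for each of these finitely many integrands and intersecting the resulting (finitely many) full-measure sets yields a single full-measure set of starting points $\x_1$ along whose trajectory $\tfrac1m F^\dagger F\to\Gamma$ and $\tfrac1m\sum_l(f_j\circ T)(\x_l)\overline{f_i(\x_l)}\to a_{ji}$ for all $i,j$. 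As $\Gamma$ is invertible, matrix inversion is continuous at $\Gamma$, so for all large $m$ the empirical Gram matrix is invertible and its inverse tends to $\Gamma^{-1}$; feeding both convergences into the bilinear contraction of the previous paragraph shows the right-hand side of (\ref{eq:finsecentries}) converges to $\tilde u_{kj}$ for $\mu$-almost every $\x_1$. The two displayed forms of the limit are literally equal, since $(f_j\circ T)(\x_l)=f_j(\x_{l+1})$ on the trajectory.

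The one point needing genuine (if mild) analytic care is that the estimator is evaluated through a matrix inverse that itself depends on $m$: what is required is convergence of the empirical Gram matrix to an \emph{invertible} limit, together with local continuity of inversion, rather than mere convergence. The remaining work is bookkeeping --- tracking the conjugation convention (the paper's inner product is linear in its first slot) and the hidden $m$-normalization in $(F^\dagger F)^{-1}$ versus $(\tfrac1m F^\dagger F)^{-1}$, so that after the $\tfrac1m$ prefactor the empirical dual vectors converge to the values of the genuine dual functions $\hat g_k$ rather than a rescaling of them. Note that invertibility of $T$ is never used, as Birkhoff's theorem holds for any ergodic measure-preserving transformation, and $\x_{l+1}$ is well defined simply because $T$ maps $M$ into itself.
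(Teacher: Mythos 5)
The paper states this theorem without proof, importing it from \cite{Mezic:2020}, so there is no in-text argument to compare against; judged on its own, your proof is correct and is the natural one. The three ingredients --- writing $\tilde u_{kj}=\langle Uf_j,\hat g_k\rangle$ as a fixed contraction of the inverse Gram matrix against the one-step correlation matrix $a_{ji}=\langle f_j\circ T,f_i\rangle$, recognizing the trajectory formula as the same contraction of empirical surrogates, and passing to the limit by applying Birkhoff's pointwise ergodic theorem to the finitely many $L^1(\mu)$ integrands $\overline{f_i}f_j$ and $(f_j\circ T)\overline{f_i}$ and then invoking continuity of matrix inversion at the invertible limit $\Gamma$ --- are exactly what the statement requires, and you correctly isolate the one nontrivial analytic point (invertibility of the empirical Gram matrix for large $m$, which needs linear independence of $\{f_1,\dots,f_N\}$ in $L^2(\mu)$; that hypothesis is implicit in the paper's preceding sentence ``the finite set of independent functions''). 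Your normalization remark is not mere pedantry: since $(F^{\dagger}F)^{-1}F^{\dagger}=\bigl(\tfrac1m F^{\dagger}F\bigr)^{-1}\bigl(\tfrac1m F^{\dagger}\bigr)$, the literal rows of that matrix carry an extra factor of $1/m$ relative to the empirical dual-function values, so the displayed average $\tfrac1m\sum_l f_j(\x_{l+1})\hat g_k^c(\x_l)$, read verbatim, would tend to $0$; the identity holds once either the prefactor $\tfrac1m$ is absorbed into the pseudoinverse or the duals are normalized via $\bigl(\tfrac1m F^{\dagger}F\bigr)^{-1}$, which is the reading you adopt. That is a genuine (if cosmetic) wrinkle in the statement, correctly diagnosed and resolved.
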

The above result is convenient as a single trajectory of an ergodic system can be used to estimate the inner product. But the formulation is restricted to measure-preserving systems. Alternatively, the above formula is valid  in any case where the function space is a Hilbert space, and inner product can be defined as a weighted sum over sample points $\x_l$,  
\be
\label{eq:finsecentries1}
\tilde u_{kj}=\lim_{m\rar\infty}\frac{1}{m}\sum_{l=1}^m  w(\x_l)f_j (\x_{l+1}) \hat g_k^c(\x_l).
\ee
\begin{proposition}
Let $(M,\mu)$ be a measure space and $T:M\rar M$. Let $\f=(f_{j_1},...,f_{j_n})$ be a subset of a basis in a Hilbert space of observables $\cO =L^2(\mu)$. Let $u_{j_kl}=0$ whenever $k\in{1,...,n},l\notin \{j_1,...,j_n\}$. Then $T$ admits a finite-dimensional, linear representation $(\f,\tilde U_{\f})$, where $\tilde U_{\f}$ is the matrix which is restriction of $\tilde U$ to $\f$.
\end{proposition}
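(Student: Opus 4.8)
The plan is to recognize that the structural hypothesis on the entries of $\tilde U$ is exactly the statement that the finite-dimensional subspace $\mathcal G=\spn\{f_{j_1},\dots,f_{j_n}\}$ is $U$-invariant; once this is in hand the representation is read off the corresponding block of $\tilde U$, and there is essentially nothing more to do beyond matching conventions. Recall the encoding set up above: since $\cO=L^2(\mu)$ is a separable Hilbert space, $\f=(f_{j_1},\dots,f_{j_n})$ is part of a basis $\{f_j\}_{j\in\bN}$ with dual basis $\{\hat f_j\}$, and $U$ is encoded by the matrix $\tilde U=(u_{kj})$ through $Uf_j=\sum_{k\in\bN}u_{kj}f_k$, $u_{kj}=\langle Uf_j,\hat f_k\rangle$. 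I take $U\colon\cO\to\cO$ to be well defined throughout; this holds whenever $\mu$ is $T$-invariant, and more generally when $T$ is $\mu$-nonsingular with bounded Radon--Nikodym density.

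First I would fix $k\in\{1,\dots,n\}$ and expand $Uf_{j_k}=\sum_{m\in\bN}u_{m\,j_k}f_m$. The hypothesis says that the entries of $\tilde U$ coupling the block $S=\{j_1,\dots,j_n\}$ to the rest of the basis vanish, i.e.\ $u_{m\,j_k}=0$ for all $m\notin S$, so the expansion collapses to $Uf_{j_k}=\sum_{l=1}^{n}u_{j_l\,j_k}\,f_{j_l}\in\mathcal G$. As this holds for each $k$, $\mathcal G$ is a $U$-invariant subspace, of dimension exactly $n$ since $f_{j_1},\dots,f_{j_n}$ are independent; equivalently, the compression $P_S\tilde U P_S$ of $\tilde U$ to the index set $S$ is not merely a truncation but the genuine restriction $U|_{\mathcal G}$. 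Letting $\tilde U_\f$ denote this $n\times n$ block, viewed as a linear map $\bC^n\to\bC^n$, and setting $\F:=\tilde U_\f$, I evaluate the collapsed identity at a point $\m$ and use $Uf_{j_k}(\m)=f_{j_k}(T\m)$ componentwise to obtain $\f(T\m)=\tilde U_\f\,\f(\m)$, which is exactly the defining relation $U\f=\f\circ T=\F(\f)$ of a finite-dimensional linear representation. Hence $(\f,\tilde U_\f)$ is the asserted representation. (The only convention to fix is whether $\f$ is stacked as a column or a row of the $f_{j_k}$, which decides whether $\F$ is $\tilde U_\f$ or its transpose; one chooses it so that $\F$ is literally the restricted matrix.)

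The one step that genuinely requires care --- the point I would treat as the substance of the argument rather than bookkeeping --- is the mode of equality. The expansion $Uf_{j_k}=\sum_l u_{j_l j_k}f_{j_l}$ is an identity in $L^2(\mu)$ and so holds only $\mu$-almost everywhere, whereas the Definition of a representation asks for $\f\circ T=\F(\f)$ to hold pointwise on $M$. Read strictly, one therefore obtains a representation in the $\mu$-a.e.\ sense, i.e.\ a representation of $T$ acting on $\mu$-equivalence classes of observables; to upgrade it to a genuine pointwise representation one must add regularity, e.g.\ continuous representatives of the $f_{j_k}$ together with suitable control on $T$ and its density. Modulo this measure-theoretic caveat and the transpose convention, the proposition follows immediately from its hypothesis being a disguised statement of the invariance of $\spn\f$, and there is no deeper obstacle.
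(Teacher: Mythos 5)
Your proof is correct and is essentially the paper's own argument: the vanishing of the entries of $\tilde U$ coupling the block $\{j_1,\dots,j_n\}$ to the rest of the basis collapses the expansion of $Uf_{j_k}$ to the span of $\f$, which is exactly the invariance that yields $\f\circ T=\tilde U_\f\,\f$. Your added remarks on the transpose/index convention (you read $u_{j_k l}$ in the intended orientation, matching the paper's verbal gloss) and on the $\mu$-a.e.\ versus pointwise sense of the identity are reasonable refinements but do not change the route.
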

\begin{proof} The condition  ``$u_{j_kl}=0$ whenever $k\in{1,...,n},l\notin \{j_1,...,j_n\}$" assures  that
\be
\f\circ T=A\f,
\ee
where $A=\tilde U_{\f}$. Namely, the time evolution of functions in $\f$ projected on any subspace that does not contain any of the functions in $\f$ is $0$.
\end{proof}
It is interesting that the finite section method can reveal nonlinear representations, too:
\begin{proposition}
Let $(M,\mu)$ be a measure space and $T:M\rar M$. Let $\f=(f_{j_1},...,f_{j_n})$ be a subset of a basis in a Hilbert space of observables $\cO =L^2(\mu)$. Let $u_{j_kl}=0$ whenever $k\in{1,...,n}$, $f_l\neq F(\f)$. Further assume that there are $l_k\in \bN,k=1,...,K$ such that $f_{l_k}=F_k(\f)$ where $F_k$ is nonlinear. Then $T$ admits a finite-dimensional, nonlinear representation $(\f,\F)$, where $\F$ is given by
\be
\F(\f)=A\tilde\f, 
\ee 
where $\f\subset\tilde \f$ and $A$ is an $n\times (n+K)$ matrix.
\end{proposition}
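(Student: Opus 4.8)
The plan is to imitate the proof of the preceding proposition (the linear finite‑section case); the only new ingredient is that the finite set of basis functions supporting $U\f$ now contains, besides $\f$ itself, the $K$ basis functions $f_{l_1},\dots,f_{l_K}$ which happen to be \emph{nonlinear} functions of $\f$. First I would fix, for each $k\in\{1,\dots,n\}$, the basis expansion
\[
Uf_{j_k}=f_{j_k}\circ T=\sum_{l\in\bN}u_{j_k l}\,f_l ,
\]
valid in $\cO=L^2(\mu)$. By hypothesis $u_{j_k l}=0$ unless $f_l$ is expressible as a function of the components of $\f$, and the only basis functions with that property are the $n$ components of $\f$ (trivially) together with the $K$ functions $f_{l_1}=F_1(\f),\dots,f_{l_K}=F_K(\f)$. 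Hence the expansion terminates:
\[
Uf_{j_k}=\sum_{p=1}^{n}u_{j_k j_p}\,f_{j_p}+\sum_{q=1}^{K}u_{j_k l_q}\,f_{l_q},
\qquad k=1,\dots,n .
\]

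Next I would assemble these $n$ identities into one matrix relation. Set $\tilde\f=(f_{j_1},\dots,f_{j_n},f_{l_1},\dots,f_{l_K})=(\f,F_1(\f),\dots,F_K(\f))$, so that $\f\subset\tilde\f$ and $\tilde\f$ has $n+K$ components, each a function of $\f$; let $A$ be the $n\times(n+K)$ matrix whose $(k,p)$ entry is $u_{j_k j_p}$ for $p\le n$ and whose $(k,\,n+q)$ entry is $u_{j_k l_q}$. Then the displayed relations read exactly $\f\circ T=A\tilde\f$. Substituting $\tilde\f=(\f,F_1(\f),\dots,F_K(\f))$ gives $\f\circ T=\F(\f)$ with
\[
\F(\z)=A\,(\z,F_1(\z),\dots,F_K(\z)),\qquad \z\in\bC^n,
\]
which is of the asserted form $\F(\f)=A\tilde\f$. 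Thus $(\f,\F)$ is a finite‑dimensional representation of $T$, and $\F$ is nonlinear because it inherits the nonlinearity of any $F_q$ occurring with a nonzero coefficient in $A$.

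I expect the only points requiring care to be the two subtleties already latent in the linear proposition. First, $Uf_{j_k}=\sum_l u_{j_k l}f_l$ holds a priori only in the $L^2(\mu)$ sense, so the representation identity $\f(T\m)=\F(\f(\m))$ is obtained $\mu$‑a.e. rather than pointwise everywhere; this is the natural notion of representation when $\cO=L^2(\mu)$, consistent with the previous statement. Second, the hypothesis must be read as asserting that the \emph{only} basis elements that are functions of $\f$ are $\f$ together with the finitely many $f_{l_q}$ (equivalently, that each $Uf_{j_k}$ is supported on that finite set), since otherwise $\tilde\f$ need not be finite; and, strictly, $\F$ is genuinely nonlinear only when at least one $u_{j_k l_q}\neq 0$ — if all of them vanish the statement collapses to the linear proposition. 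Modulo these remarks the argument is the bookkeeping just indicated.
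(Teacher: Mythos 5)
Your proof is correct and follows essentially the same route as the paper's: both use the hypothesis on the coefficients $u_{j_k l}$ to conclude that the basis expansion of $f_{j_k}\circ T$ is supported on the finite set $\{f_{j_1},\dots,f_{j_n},f_{l_1},\dots,f_{l_K}\}$, which gives $\f\circ T=A\tilde\f=\F(\f)$ directly. Your added caveats (the $\mu$-a.e. sense of the identity, the need to read the hypothesis as finitely supporting $U\f$, and the remark that genuine nonlinearity requires some $u_{j_k l_q}\neq 0$) are reasonable clarifications that the paper's terser argument leaves implicit.
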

\begin{proof} The condition ``$u_{j_kl}=0$ whenever $k\in{1,...,n}$, $f_l\neq F(\f)$" assures that the time evolution of $\f$ under $T$ is a (nonlinear) function of $\f$ only, and 
$
\F(\f)=A\tilde\f, 
$
because $f_{j_k}\circ T=\sum_{m\in \{j_1,...,j_n,l_1,...,l_K\} } u_{j_km}f_m$.
\end{proof}

\section*{Extensions}
\subsection*{Koopman Operator and Control Systems}

The relationship between Koopman operator theory and control theory have been explored intensely over the last decade \cite{mauroyetal:2020}.
Control systems in discrete time are defined on the product space $M\times N$,
\bea
\m'&=&T(\m,\n) \nonumber \\
\n'&=&W(\n),
\label{eq:cont}
\eea
where $\m\in M,\n\in N$.  The system (\ref{eq:cont}) is a skew-product system \cite{Petersen:1983}. The $N$ system is the control system.  Physically, the assumption is that the $N$ system   is separated from the $M$ system, and it possesses its own internal dynamics described by $W$. A simple additive, linear structure for the representation is obtained if we can find $\f:M\rar \C$ and $\u:N\rar \C$ such that
\be
\f'=A\f+B\u
\ee
Note that $\u$ is physically the input to the system. This structure has been exploited in \cite{korda2018linear} to establish connection to Model Predictive Control, where $\f$ was chosen to be a simple library of functions on $M$ and $\u$ the vector of time-delayed inputs. Feedback relationship of the type
\be
\u=\H(\f)
\ee
couples the two systems.
 The linear representation can be related to a nonlinear finite dimensional representation by learning the associated conjugacy, as in \cite{folkestad2020extended}.

\subsection*{Static Koopman Operator}

We now consider a map $T$ between different spaces  $T:M\rar N$. The set $\cO_M$ of all complex functions $f:M\rar \bC$ is the space of observables on $M$, while the set $\cO_N$ of all complex functions $g:N\rar \bC$ is the space of observables on $N$. Both are  linear vector spaces over the field of complex numbers. A finite-dimensional representation $(\f,\g,\F)$ of $T$  is a set of functions $\f=(f_1,...,f_n),\g=(g_1,...,g_m)$ and a mapping $\F$ such that
\be
\g(T\m)=\F(\f(\m)),
\label{srep}
\ee
where $\F:\bC^n\rar\bC^m$ and $(n,m)$ is the dimension of the representation. We again have the notion of a {\it faithful} representation:
\begin{definition} A representation $(\f,\g,\F)$ of $T:M\rar M$ is called faithful provided $\f:M\rar\f(M),\g:N\rar\g(N)$ are injective.
\end{definition}
The previous notion of representation is recovered when $M=N$ and $\f=\g$.

Any  map $T:M\rar N$ defines the {\it pullback} operator $U:\cO_N\rar\cO_M$ by
\be
Ug(\m)=g(T\m).
\ee
This is the linear composition operator associated with $T:M\rar N$. The image ${\cal{I}}(U)$ of $U$ is the set of functions $f:\cO_M\rar \bC$ that are constant on level sets $L_{\n}$ of $T$:
\be
L_\n=\{\m\in M|\m\in T^{-1}(\n) \}.
\ee
The space ${\cal{I}}(U)$ is a linear subspace of $\cO_M$.

Provided $\cO_M$ is a separable Hilbert space, and ${\cal{I}}(U)$ is closed, we can also define the pushforward operator $P:\cO_M\rar\cO_N$ by
\be
P(f)=(\Pi f) \circ T,
\ee
where $\Pi:\cO_M\rar {\cal{I}}(U)$ is the orthogonal projection operator onto ${\cal{I}}(U)$. The singular value decomposition is valid for bounded operators between separable Hilbert spaces  and thus we have the following characterization of
${\cal{I}}(U)$:

\begin{proposition}  Let $\cO_N,\cO_M$ be separable Hilbert spaces, and ${\cal{I}}(U)$ is closed. Then the space ${\cal{I}}(U)$ is orthogonal to the subspace at singular value $0$. In addition,  $U$ is  the pseudoinverse of $P$.
\end{proposition}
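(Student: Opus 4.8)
The plan is to read both assertions off the singular value decomposition of the bounded pullback operator $U:\cO_N\rar\cO_M$ and then verify the four Moore--Penrose relations. First I would fix a singular system $\{(\sigma_k,v_k,u_k)\}$ for $U$, with $\sigma_k>0$, $\{v_k\}$ orthonormal in $\cO_N$, $\{u_k\}$ orthonormal in $\cO_M$, and $Ug=\sum_k\sigma_k\langle g,v_k\rangle u_k$. Extending $\{u_k\}$ to an orthonormal basis of $\cO_M$ adjoins exactly the vectors ``at singular value $0$'', whose closed span is $\ker U^{*}$. By construction the range of $U$ equals $\overline{\spn\{u_k\}}$, which is orthogonal to $\ker U^{*}$; since $\mathcal{I}(U)$ is assumed closed, this sharpens to $\mathcal{I}(U)=(\ker U^{*})^{\perp}$, i.e. $\mathcal{I}(U)$ is precisely the orthogonal complement of the singular-value-zero subspace of $\cO_M$ (equivalently $\ker U^{*}=\ker P=\mathcal{I}(U)^{\perp}$). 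This is just the general identity $\overline{\mathrm{range}(U)}=(\ker U^{*})^{\perp}$ made concrete, with the closure removed by hypothesis, which is the first assertion.

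For the second assertion I would show $U=P^{+}$ by checking the four Moore--Penrose identities, the key inputs being two projection formulas. Reading the definition $P(f)=(\Pi f)\circ T$ as ``$P(f)$ is the element of $\cO_N$, supported where $T$ maps, whose pullback equals $\Pi f$'', one gets at once $UP=\Pi$, the orthogonal (hence self-adjoint) projection of $\cO_M$ onto $\mathcal{I}(U)$. Dually, for $g\in\cO_N$ one has $Ug\in\mathcal{I}(U)$, so $\Pi(Ug)=Ug$ and $P(Ug)$ is the representative of $Ug$ lying in $(\ker U)^{\perp}$; hence $PU$ is the self-adjoint orthogonal projection of $\cO_N$ onto $(\ker U)^{\perp}$. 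Then $UPU=U$ because $U$ annihilates $\ker U$, and $PUP=P$ because $\Pi$ is idempotent and fixes $\mathcal{I}(U)$; together with $(UP)^{*}=UP$ and $(PU)^{*}=PU$ these are exactly the defining relations of the Moore--Penrose pseudoinverse, which is unique once $P$ has closed range. That hypothesis is met here: since $\mathcal{I}(U)$ is closed, $U$ restricted to $(\ker U)^{\perp}$ is a bounded bijection onto $\mathcal{I}(U)$, its inverse is bounded by the bounded inverse theorem, and $P=U^{-1}\Pi$ is therefore a bounded operator. Hence $U=P^{+}$, and symmetrically $P=U^{+}$.

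The step I expect to be the genuine obstacle is making $P$ unambiguous: the symbol $(\Pi f)\circ T$ must be interpreted as ``the function on $N$ whose pullback is $\Pi f$'', which exists because $\Pi f\in\mathcal{I}(U)$ is constant on the level sets $L_\n$, but one must choose (say, set to zero) its values on $N\setminus T(M)$ and then identify $\ker U$ and $\ker U^{*}$ concretely as the functions vanishing on $T(M)$, respectively supported off $T(M)$. When $\cO_M$ and $\cO_N$ are $L^2$ spaces this forces care about measurability of $T(M)$ and about null-set ambiguities; once those are settled, $P$ is a well-defined bounded operator with $UP=\Pi$ and $PU$ the complementary projection, and the algebra above is routine. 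A minor secondary point: for non-compact $U$ the ``SVD'' should be understood through the polar decomposition $U=V|U|$ and the spectral theorem for $|U|=(U^{*}U)^{1/2}$, with ``singular-value-zero subspace'' meaning $\ker|U|=\ker U$ on the domain side and $\ker U^{*}$ on the codomain side.
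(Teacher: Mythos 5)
Your proof is correct and rests on the same key identity as the paper's own two-line argument --- namely $UPf=\Pi f$ together with the identification of the singular-value-zero subspace of $\cO_M$ with $\mathcal{I}(U)^{\perp}$ (the paper reads this off $\ker P$, you off $\ker U^{*}$; the two coincide) --- but you supply everything the paper leaves implicit: the full set of Moore--Penrose identities rather than just $UP=\Pi$, the boundedness of $P$ via the closed-range hypothesis and the bounded inverse theorem, and the genuine interpretive point that $(\Pi f)\circ T$ must be read as the descent of $\Pi f$ to a function on $N$. The one blemish is in your closing caveat: $\ker U^{*}$ is a subspace of $\cO_M$ (it equals $\mathcal{I}(U)^{\perp}$, the functions on $M$ with vanishing conditional expectation), not a set of functions supported off $T(M)$ --- there you appear to mean $\ker P^{*}=(\mathrm{range}\,P)^{\perp}\subset\cO_N$ --- but this slip does not affect the argument.
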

\begin{proof} The kernel of $P$, consisting of functions orthogonal to ${\cal{I}}(U)$, is the subspace of  $\cO_M$ corresponding to singular value $0$ of $P$. 
We also have
\be
UPf=\Pi f.
\ee
proving that $U$ is the pseudoinverse of $P$. 
\end{proof}
With a little bit of topological and measure-theoretic infrastructure, we can characterize the projection operator $\Pi$
\begin{theorem} Let $M,N$ be two Radon spaces - separable metric spaces on which every probability measure is a Radon measure. Assume that $M$ is endowed with a Borel measure $\mu$, and $T$ is a measurable map. Let $N$ can be endowed with the measure $\nu (A)=\mu(T^{-1}(A)$. Then 
\be
\Pi(f)=\bE(f|{\cal B}_{T^{-1}}),
\label{eq:cond}
\ee
where $\bE(f|{\cal B}_{T^{-1}})$, the conditional expectation of $f$ with respect to the sigma algebra induced by  $T$, is the orthogonal projection of $f$ on ${\cal I}(U)$.
\end{theorem}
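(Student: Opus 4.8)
The plan is to recognize (\ref{eq:cond}) as nothing but the $L^2$ characterization of conditional expectation, so that the proof splits into two parts: (a) identifying the closed subspace $\mathcal{I}(U)\subset\cO_M=L^2(M,\mu)$ with the space $L^2(M,\mathcal{B}_{T^{-1}},\mu)$ of square-integrable functions measurable with respect to the sub-$\sigma$-algebra $\mathcal{B}_{T^{-1}}=T^{-1}(\mathcal{B}_N)$ generated by $T$, and (b) verifying that $f\mapsto\bE(f\mid\mathcal{B}_{T^{-1}})$ is precisely the orthogonal projection onto that subspace.

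For (a) I would note that every element $g\circ T$ of $\mathcal{I}(U)$ is tautologically $\mathcal{B}_{T^{-1}}$-measurable, and conversely invoke the Doob--Dynkin factorization lemma --- valid because the Radon hypothesis makes $N$ (up to a $\nu$-null set) standard Borel --- to write any $\mathcal{B}_{T^{-1}}$-measurable function on $M$ as $g\circ T$ for a Borel $g:N\to\bC$. Since $\nu(A)=\mu(T^{-1}(A))$ makes $T$ measure preserving, $\|g\circ T\|_{L^2(\mu)}=\|g\|_{L^2(\nu)}$, so $\mathcal{I}(U)$ is isometric to $L^2(N,\nu)$ and, as a subspace, coincides with $L^2(M,\mathcal{B}_{T^{-1}},\mu)$; this subspace is automatically closed, consistent with the standing assumption on $\mathcal{I}(U)$.

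For (b), recall that $h=\bE(f\mid\mathcal{B}_{T^{-1}})$ is, by definition (existence by Radon--Nikodym, using $\sigma$-finiteness of $\mu$), the a.e.-unique $\mathcal{B}_{T^{-1}}$-measurable function with $\int_A h\,d\mu=\int_A f\,d\mu$ for every $A\in\mathcal{B}_{T^{-1}}$, and that $h\in L^2$ whenever $f\in L^2$. To show $h=\Pi(f)$ it suffices that $f-h\perp\mathcal{I}(U)$. Because simple $\mathcal{B}_{T^{-1}}$-measurable functions are dense in $L^2(M,\mathcal{B}_{T^{-1}},\mu)$ and the inner product is continuous, it is enough to test against indicators $\1_A$, $A\in\mathcal{B}_{T^{-1}}$ (which are real, so the conjugation in $\langle\cdot,\cdot\rangle$ is harmless): $\langle f-h,\1_A\rangle=\int_A(f-h)\,d\mu=0$ by the defining property of $h$, and for complex $f$ one applies this to real and imaginary parts. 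Since also $h\in\mathcal{I}(U)$, uniqueness of the orthogonal projection gives $\Pi(f)=h=\bE(f\mid\mathcal{B}_{T^{-1}})$. Optionally I would close by invoking the disintegration of $\mu$ over $\nu$ afforded by the Radon hypothesis, $\mu=\int_N\mu_\n\,d\nu(\n)$ with $\mu_\n$ supported on $L_\n=T^{-1}(\n)$, which realizes the projection concretely as averaging over level sets, $\Pi(f)(\m)=\int f\,d\mu_{T\m}$.

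The step I expect to be the real obstacle is (a): the intuitive description of $\mathcal{I}(U)$ as ``functions constant on the level sets of $T$'' is not literally equivalent to $\mathcal{B}_{T^{-1}}$-measurability without a regularity assumption on the target, and it is exactly this that forces the Radon-space hypothesis --- it licenses the Doob--Dynkin factorization and guarantees that $\{g\circ T:g\in\cO_N\}$ is already a closed subspace equal to $L^2(M,\mathcal{B}_{T^{-1}},\mu)$, rather than only its closure being so. Once $\mathcal{I}(U)$ is correctly identified, part (b) is the standard Hilbert-space computation.
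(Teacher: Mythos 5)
Your proof is correct, but it takes a genuinely different route from the paper's. The paper proves orthogonality of $f-\bE(f|{\cal B}_{T^{-1}})$ to ${\cal I}(U)$ in one stroke via the disintegration of measure theorem: it writes $\int_M h^c(f-\bE(f|{\cal B}_{T^{-1}}))\,d\mu$ as $\int_N\int_{T^{-1}(\n)}(\cdot)\,d\mu_\n\,d\nu(\n)$, pulls $h^c$ out of the inner integral because $h\in{\cal I}(U)$ is constant on fibers, and kills the inner integral by identifying $\bE(f|{\cal B}_{T^{-1}})$ with the fiberwise average $\int_{T^{-1}(\n)}f\,d\mu_\n$. That is exactly the argument you relegate to your ``optional'' closing remark. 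Your main line instead runs through the standard $L^2$ theory of conditional expectation: identify ${\cal I}(U)$ with $L^2(M,{\cal B}_{T^{-1}},\mu)$ via Doob--Dynkin, then test $f-h$ against indicators $\1_A$, $A\in{\cal B}_{T^{-1}}$, using the defining identity $\int_A h\,d\mu=\int_A f\,d\mu$ and density of simple functions. What your route buys is precision at exactly the point the paper glosses over: the paper defines ${\cal I}(U)$ as the image of $U$ (functions of the form $g\circ T$), asserts without comment that these are the functions constant on level sets, and silently uses that the conditional expectation coincides with the disintegration average --- all three identifications need the regularity supplied by the Radon hypothesis, and your step (a) is where that dependence is made explicit. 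What the paper's route buys is brevity and a concrete geometric picture ($\Pi$ as averaging over the fibers of $T$), at the cost of invoking the disintegration theorem as a black box. The two arguments are logically equivalent once the disintegration is available; yours would survive in settings where one only has the abstract conditional expectation and not a genuine family of fiber measures.
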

\begin{proof} First observe that $\Pi(f)$ as defined in (\ref{eq:cond}) is in  ${\cal I}(U)$ and defines a projection, since applying conditional expectation twice yields the same result as applying it once. We need to prove that $f-\bE(f|{\cal B}_{T^{-1}})$ is orthogonal to  ${\cal I}(U)$, i.e. 
\be
\int_Mh^c (f-\bE(f|{\cal B}_{T^{-1}}))d\mu=0,
\ee 
where $h\in {\cal I}(U)$. Any function in ${\cal I}(U)$ is constant on level sets of $T$, and by disintegration of measure theorem
\bea
&  &\int_Mh^c (f-\bE(f|{\cal B}_{T^{-1}}))d\mu \nonumber \\
& & =\int_M\int_{T^{-1}(\n)}h^c (f-\bE(f|{\cal B}_{T^{-1}}))d\mu_\n d\nu(\n) \nonumber \\
                                                                & &=\int_Mh^c \left(\int_{T^{-1}(\n)}(f-\bE(f|{\cal B}_{T^{-1}}))d\mu_\n\right)d\nu(\n) \nonumber \\
                                                                & & =0. \nonumber \\
\eea
since 
\be
\int_{T^{-1}(\n)}f d\mu_\n=\bE(f|{\cal B}_{T^{-1}}).
\ee
\end{proof}
Now we have
\begin{proposition} The set of functions $(\f,\g,\F):M\times N\times \bC^k\rar \bC^k\times \bC^l\times \bC^l$ is a finite-dimensional representation of $T$
iff
\be
U\g(\m)=\F(\f(\m)).
\ee
\end{proposition}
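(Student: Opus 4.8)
The plan is to recognize that the asserted equivalence is just the definition of a finite-dimensional representation of the static map $T:M\rar N$ rewritten through the pullback operator, so the proof amounts to checking that the two displayed conditions literally agree.

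First I would recall that $\g=(g_1,\dots,g_l)$ is a tuple of observables on $N$, i.e. each component lies in $\cO_N$, and that the pullback operator $U:\cO_N\rar\cO_M$ acts by $Ug(\m)=g(T\m)$. Applying $U$ componentwise to the vector $\g$ gives, for every $\m\in M$,
\be
U\g(\m)=(Ug_1(\m),\dots,Ug_l(\m))=(g_1(T\m),\dots,g_l(T\m))=\g(T\m).
\ee
This step is purely bookkeeping with definitions; the only things to verify are that each $g_i\in\cO_N$ so that $U$ may legitimately be applied, and that the dimensions are compatible, $k=n$ for the domain of $\F$ and $l=m$ for its codomain, which is already encoded in the typing $(\f,\g,\F):M\times N\times\bC^k\rar\bC^k\times\bC^l\times\bC^l$.

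Next I would invoke the definition of a finite-dimensional representation of $T:M\rar N$ from (\ref{srep}): $(\f,\g,\F)$ is such a representation precisely when $\g(T\m)=\F(\f(\m))$ for all $\m\in M$. Substituting the identity $U\g(\m)=\g(T\m)$ established above turns this condition into $U\g(\m)=\F(\f(\m))$, and the implication runs in both directions, giving the claimed ``iff''. I do not expect a genuine obstacle here: the proposition is a translation between the composition formulation (\ref{srep}) and the operator formulation, and all the content sits in having defined the pullback $U$ correctly. The only care needed is notational — confirming that $U$ is understood to act componentwise on vector-valued observables, and reconciling the index names $(k,l)$ in the statement with $(n,m)$ in the definition of representation.
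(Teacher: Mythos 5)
Your argument is correct and is exactly the content the paper intends: the paper states this proposition without proof precisely because, as you observe, $U\g(\m)=\g(T\m)$ by the definition of the pullback operator, so the displayed condition is literally the defining equation (\ref{srep}) of a representation of $T:M\rar N$. Your bookkeeping about componentwise action of $U$ and the identification $k=n$, $l=m$ is the only thing to check, and you check it.
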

The representation is linear provided $\F$ is a finite dimensional  $k\times l$ matrix:
\be
U\g(\m)=A(\f(\m))
\ee
To get an approximation to a finite-dimensional linear representation, we may select basis $f_1,...,f_m,...)$ on $\cO_M$ and $g_1,...,g_n,...$ on $\cO_N$,
and construct the representation of $P$. We assume we have access to $N$ realizations of data pairs corresponding to $(\m_j,\n_j=T(\m_j)),j=1,...,N$. The data 
points are $\f=(f_1(m_j),...,f_m(m_j))^T$ and $\g=(g_1(n_j),...,g_n(n_j))^T$. We form matrices 
\bea
X&=&[\f(\m_1)...f(\m_N)],\nonumber \\
Y&=&[\g(\n_1)...g(\n_N)].
\eea
The solution to
\be
\min_A ||Y-AX||
\ee
is 
\be
A=YX^+
\ee
where $X^+$ is the pseudoinverse of $X$.

\section*{Neural Networks and the Koopman Operator}
The key to the DMD-type approximations to the Koopman operator are the predetermined basis functions. In the case of EDMD \cite{williamsetal:2015}, these are selected a-priori, and in the case of Hankel-DMD \cite{ArbabiandMezic:2017} they are generated using an initial choice of observables supplemented by time-delayed observables generated by dynamics. While EDMD suffers from curse of dimensionality, Hankel-DMD does not, as in any dimension the generated functions fill up an invariant subspace of the Koopman operator. However, there is no guarantee that there is a (linear or nonlinear) finite representation amongst the observables in either case. GLA  solves that problem by computing the spectrum and then computing the eigenfunctions by weighted time averages. As we have seen, eigenfunctions provide us with linear representations. The deep neural network formalism has been used to compute linear representations, where both the observables and the eigenmatrix $A$ are learned \cite{li2017extended,yeung2019learning,lusch2018deep,takeishi2017learning}. 

The neural network formulation for the solution of the representation eigenproblem (\ref{eq:repeig}) is
\be
\min_{(\theta,\psi)} \sum_{j=1}^m || \n_{\cO}(\m_{j+1},\theta)-\n_\F(\n_{\cO}(\m_{j},\theta),\psi)||
\ee
where $\n_{\cO}(\m_{j+1},\theta)$ is the neural network representing the observables, with parameters $\theta$, and 
$\n_\F(\n,\psi)$ is the neural network representing the eigenmap, with parameters $\psi$. The dimension of the vector $\n$ (and thus the dimension of $\F$) is a hyperparameter.

\section*{Relationship to Mori-Zwanzig Formalism}
The Mori-Zwanzig formalism describes evolution of a subset of observables $\f\in \cO$, where $\cO$ is a Hilbert space, using the Koopman operator and orthogonal projection $P$ on the subspace spanned by $\f=\{f_1,...,f_n\}$. Using $P$ and $Q=I-P$ we get
\be
U^t\f=(P+Q)U^t \f=\tilde U^t_{n}\f+QU^t\f,
\ee
where $\tilde U^t_{n}$ is the finite section matrix (\ref{comp}), and $Q_nU^t\f$ is the projection of the evolution of $\f$ on the space orthogonal to the span of $\f$ in $\cO$. It is immediately clear that, provided $(\f,\tilde U_{n}^t)$ is a linear representation, $QU^t\f$ is zero, and thus we get 
\be
U^t\f=\tilde U^t_{n}\f.
\ee
Assuming $U^t_n$ is diagonalizable, we get
\be
U^t\f=\Phi D^t\Phi^{-1}\f.
\ee
where $D^t$ is a diagonal matrix containing $\lambda_j^t, j=1,...,n$  on the diagonal, where $\lambda_j$ is an eigenvalue of $\tilde U^t_{n}$.

In discrete time, the evolution reads
\bea
U^{2}&=&(P+Q)U(PU +QU) \nonumber \\
&=&(PU)^2+PUQU+QUPU+(QU)^2\nonumber 
\eea
and, by induction
\bea
U^n&=&(PU)^n\nonumber \\
&+&\sum\limits_{\substack{k_1,...,k_n=0\\\sum_j k_j=n-1}}^n (PU^{k_1}QU^{k_2}...PU^{k_1}QU^{k_2})
\nonumber \\
&+&(QU)^n \nonumber \\
&=&\tilde U_n\nonumber \\
&+&\sum\limits_{\substack{k_1,...,k_n=0\\\sum_j k_j=n}}^{n-1} (PU^{k_1}QU^{k_2}...PU^{k_1}QU^{k_2})
\nonumber \\
&+&(QU)^n \nonumber
\label{discMZ}
\eea
The second term is often interpreted as the ``memory term" but in fact it contains a total of $n$ applications of $U$ just like the first and the last term. Rather, it describes the part of the evolution that depends on the evolution in both the span of $\f$ and its orthogonal complement. The following result clarifies the point, and identifies the situation in which the evolution in projected variables is Markovian:
\begin{proposition}If the evolution in the  orthogonal complement of $\f$ is dependent on $\f$ only, but is not $0$, i.e. $QU\f\neq 0$, then $T$ admits  a nonlinear representation $(\f,\F)$.
\end{proposition}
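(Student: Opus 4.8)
The plan is to read the Mori--Zwanzig decomposition $U\f=(P+Q)U\f=\tilde U_n\f+QU\f$ not as a statement about memory, but as a candidate right-hand side for the representation equation \eqref{eq:repeig}. Under the stated hypothesis each summand will turn out to be a function of $\f$ alone, so their sum defines a map $\F$ with $\f\circ T=\F(\f)$; the remaining work is to certify that this $\F$ is genuinely nonlinear, and that is where the assumption $QU\f\neq 0$ is used.

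First I would note that $PU\f=\tilde U_n\f$ is automatically a function of $\f$ --- indeed a linear one --- since each component $PUf_j$ is by construction a linear combination of $f_1,\dots,f_n$, hence of the form $\y\mapsto(\tilde U_n\y)_j$. Next, the hypothesis that ``the evolution in the orthogonal complement of $\f$ depends on $\f$ only'' is exactly the assertion that each component $QUf_j$ is constant on the joint level sets of $\f$; invoking the Lemma that the observables constant on joint level sets of $\f$ form the subspace $\cO_\f$, we may write $QUf_j=G_j(\f)$ for suitable $G_j$. Adding the two pieces componentwise gives $U\f=\f\circ T=\tilde U_n\f+G(\f)=:\F(\f)$, so $(\f,\F)$ satisfies \eqref{eq:repeig} and is a finite-dimensional representation of $T$.

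It then remains to show $\F$ is not linear. I would argue by contradiction: if $\F$ coincided on $\f(M)$ with a linear map $A$, then $\f\circ T=A\f$, so each $Uf_j=\sum_k A_{jk}f_k$ lies in $\spn\{f_1,\dots,f_n\}$; applying $Q$, which annihilates that span, gives $QUf_j=0$ for every $j$, i.e. $QU\f=0$, contradicting the hypothesis. Hence at least one component of $\F$ disagrees on $\f(M)$ with every linear functional, so $\F$ is nonlinear and $(\f,\F)$ is a nonlinear representation.

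The only step that is not pure bookkeeping is this last one, and the delicate point there is the correct reading of ``nonlinear'' for a representation: $\F$ is pinned down only on the state space $\f(M)$, not on all of $\bC^n$, so one must rule out the existence of \emph{any} linear map agreeing with $\F$ there, rather than merely observing that the particular $\F$ we wrote down looks nonlinear. What makes this clean is the orthogonality of the ranges of $P$ and $Q$: a nonzero element of $Q\cO$ can never lie in $\spn\{\f\}\subset P\cO$, so $QU\f\neq 0$ is a genuine obstruction to linearity and not an artifact of the chosen decomposition.
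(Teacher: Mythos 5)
Your proposal is correct, and its core is the same as the paper's: decompose $U\f=PU\f+QU\f$, observe that $PU\f=\tilde U_n\f$ is automatically a (linear) function of $\f$ and that the hypothesis makes $QU\f=\G(\f)$ a function of $\f$, and sum the two pieces to obtain $\f\circ T=\F(\f)$. Where you go beyond the paper is in actually certifying the word ``nonlinear'': the paper's proof stops at the construction of $\F$, whereas you add the contradiction argument that if any linear $A$ agreed with $\F$ on $\f(M)$ then $Uf_j\in\spn\{f_1,\dots,f_n\}$ for each $j$, so $Q$ would annihilate $U\f$, contradicting $QU\f\neq 0$. That step is sound (and is exactly the right way to handle the subtlety that $\F$ is only pinned down on $\f(M)$), so your write-up is, if anything, more complete than the one in the paper.
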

\begin{proof} Since $QU\f=\G(\f)$ for some $\G:\bC^n\rar \bC^n$ and $PU\f=\tilde U_n\f$ then
\be
U\f=PU\f+QU\f=\tilde U_n\f+\G(\f)=\F(\f).
\ee
where $\F:\bC^n\rar \bC^n$.
\end{proof}
The following example of the result in the above proposition also indicates the perils of modeling the $(QU)^n$ term in (\ref{discMZ}) as noise as it is commonly done in Mori-Zwanzig literature \cite{mori1965transport,zwanzig2001nonequilibrium,venturi2014convolutionless,lin2019data}.
\begin{example} Consider the irrational circle rotation $T:S^1\rar S^1$ defined by $z'=e^{i\omega}z$ where $z=e^{i\theta}$ and $\omega/2\pi$ is irrational. This  is an ergodic system on  $S^1$. We denote the complex inner product with respect to Haar measure $\mu$ on $S^1$ by $\left<\cdot,\cdot\right>$. Consider an analytic $L^2(\mu)$ observable $f:S^1\rar \bC$ that separates points on $S^1$, namely $f(z_1)=f(z_2)\Rightarrow z_1=z_2$. The Taylor expansion of $f$ is given by 
\be
f=\sum_{n=0}^\infty c_n z^n,
\label{tayexp}
\ee
where $c_n\in \C$. We have 
\be
Uf=f'=\sum_{n=0}^\infty c_n e^{in\omega}z^n.
\ee
Denoting the complex conjugate by $(\cdot)^c$, we define the complex scalar $\lambda$ by
\be
\lambda=\left<f',f\right>=\sum_{n=0}^\infty c_nc_n^c e^{in\omega}\left<z^n,(z^n)^c\right>=\sum_{n=0}^\infty |c_n|e^{in\omega}.
\ee
We have
\be
f'=\lambda f+QUf.
\ee
Note that
\be
QUf=\sum_{n=0}^\infty c_n (1-\lambda) e^{in\omega}z^n=(1-\lambda)Uf.
\ee
Since $f$ separates and $T$ is a bijection, $f(z_1)=f(z_2)\Rightarrow z_1=z_2 \Rightarrow Uf(z_1)=Uf(z_2)$,
and thus $QUf=(1-\lambda)Uf=G(f)$ where $G:f(S^1)\rar \bC$. Thus,
\be
f'=\lambda f+Gf.
\label{clos}
\ee
is Markovian, i.e. contains no memory terms, and is ``closed" i.e. the term $QUf$ should not be modeled as noise. The result can hold even of $f$ does not separate points on $S^1$. namely, it is easy to see that for any $n$, $f=z^n$ leads to $c_n=1,c_{j\neq n}=0, \lambda=e^{in\omega},G(f)=0$ and the evolution reads
\be
f'=e^{in\omega} f.
\ee
reflecting the fact that $f$ is an eigenfunction of $U$ with eigenvalue $e^{in\omega}$.
\end{example}
The main result (\ref{clos}) in the above example is also true for dynamical systems in Hilbert functional spaces in which the Koopman operator has point spectrum \cite{Mezic:2019}, and there is a set of separating {\it principal eigenfunctions} \cite{MohrandMezic:2014} of $U$ or $U^t$. Provided the set of observables $\f$ separates, we can use the algebraic property of eigenfunctions (\ref{alg}) then the evolution of $\f$ under $U$ reads
\be
\f'=A\f+\G(\f).
\ee
and there are no noise and memory terms.

From the foregoing analysis   it becomes clear that the noise and memory terms in the Mori-Zwanzig framework arise due to 1) the fact that the chosen representation $\f$ is not faithful and 2) the spectrum of the Koopman operator associated with a dynamical system has a continuous part.
\section*{Conclusions and Futures}

We provided a framework for learning of dynamical systems rooted in the concept of representations and Koopman operators. The interplay between the two leads to the full description of systems that can be represented linearly in a finite dimension, based on the properties of Koopman operator spectrum. As shown here, even nonlinear representations can be learned using the Koopman operator framework. 

The essential difference in the type of learning happening in our brains and the type of supervised learning dominant in machine learning is the notion of time. Time 
is also at the core of understanding causal relationships. Namely, without time only correlation between observables is possible. The Koopman operator theory remedies this by explicitly taking time into account and providing this dimension of learning with the explicit mathematical structure. Moreover, based on techniques such as Generalized Laplace Analysis, that naturally yield themselves to adjustments using streaming data, unsupervised learning, leading to generative models, is achievable, where future data is adapted organically into the learned structure.  The approach thus provides a suitable setting for unsupervised learning, and extends to ``static" problems that do not incorporate time. 
\section*{Acknowledgements} This work was supported by This work was supported by ONR contracts N00014-18-P-2004 and N00014-19-C-1053 and DARPA contract HR0011-18-9-0033.

\bibliography{KvN,MOTDyS,MOTDyS2}

\end{document}